\theoremstyle{plain}
\newtheorem{theorem}{Theorem}[section]
\newtheorem{corollary}[theorem]{Corollary}
\newtheorem{lemma}[theorem]{Lemma}
\newtheorem{proposition}[theorem]{Proposition}
\theoremstyle{remark}
\newtheorem{remark}{Remark}
\DeclareTextCompositeCommand{\u}{T1}{i}{\u\imath}
\let\div\relax
\DeclareMathOperator{\div}{\nabla\cdot}
\DeclareMathOperator{\divbf}{\boldsymbol \nabla \cdot}
\DeclareMathOperator{\gradbf}{\bf \nabla}
\newcommand{\taubold}{\boldsymbol \tau}
\newcommand{\Nbb}{\mathbb N}
\newcommand{\Pbb}{\mathbb P}
\newcommand{\Rbb}{\mathbb R}
\newcommand{\RM}{\mathbb{RM}}
\newcommand{\qbf}{\mathbf q}
\newcommand{\nbf}{\mathbf n}
\newcommand{\Norm}[1]{{\left\|{#1} \right\|}}
\newcommand{\SemiNorm}[1]{{\left|{#1} \right|}}
\newcommand{\Normth}[1]{{\left\vert\kern-0.25ex\left\vert\kern-0.25ex\left\vert #1 
    \right\vert\kern-0.25ex\right\vert\kern-0.25ex\right\vert}}
\newcommand{\PiRM}{\boldsymbol \Pi_{\RM}}
\newcommand{\qbfRM}{\qbf^{\text{RM}}}
\newcommand{\Sigmabold}{\boldsymbol\Sigma}
\newcommand{\Sigmaboldtilde}{\widetilde{\Sigmabold}}
\newcommand{\ubf}{\mathbf u}
\newcommand{\vbf}{\mathbf v}
\newcommand{\cbf}{\mathbf c}
\newcommand{\Vbf}{\mathbf V}
\newcommand{\etabold}{\boldsymbol \eta}
\newcommand{\CBAz}{C_{\mathcal{BA},0}}
\newcommand{\CBAo}{C_{\mathcal{BA},1}}
\newcommand{\CBAoA}{\CBAo^A}
\newcommand{\CBAoB}{\CBAo^B}
\newcommand{\betaz}{\beta_0}
\newcommand{\CNLz}{C_{\mathcal{NL},0}}
\newcommand{\CNLo}{C_{\mathcal{NL},1}}
\newcommand{\CNLd}{C_{\mathcal{NL},d}}
\newcommand{\CNLzstar}{C_{\mathcal{NL},0}^*}
\newcommand{\betao}{\beta_1}
\newcommand{\Piz}{\Pi^0}
\newcommand{\Piboldzz}{\boldsymbol\Pi^0}
\newcommand{\nablaS}{\nabla_S}
\newcommand{\nablaSS}{\nabla_{SS}}
\newcommand{\betastarz}{\beta^*_0}
\newcommand{\Ccal}{\mathcal C}
\newcommand{\Ccalzinf}{\Ccal_0^\infty}
\DeclareMathOperator{\ds}{\mathrm{d}s}
\DeclareMathOperator{\dx}{\mathrm{d}x}
\DeclareMathOperator{\dy}{\mathrm{d}y}
\DeclareMathOperator{\dz}{\mathrm{d}z}
\DeclareMathOperator{\dt}{\mathrm{d}t}
\DeclareMathOperator{\dxi}{\mathrm{d}\xi}
\DeclareMathOperator{\deta}{\mathrm{d}\eta}
\newcommand{\Brho}{B_\rho}
\DeclareMathOperator{\supp}{supp}
\newcommand{\hOmega}{R}
\newcommand{\partialxj}{\partial_{x_j}}
\newcommand{\partialxjj}{\partial^2_{x_{j}^2}}
\newcommand{\partialxjl}{\partial^2_{x_{j}x_\ell}}
\newcommand{\partialxjjl}{\partial^3_{x_{j}^2x_\ell}}
\newcommand{\partialxl}{\partial_{x_\ell}}
\newcommand{\partialxil}{\partial_{\xi_\ell}}
\newcommand{\fhat}{\widehat f}
\newcommand{\ghat}{\widehat g}
\newcommand{\Rbbn}{\Rbb^n}
\newcommand{\xio}{\xi_1}
\newcommand{\xitw}{\xi_2}
\newcommand{\xith}{\xi_3}
\newcommand{\xo}{x_1}
\newcommand{\xtw}{x_2}
\newcommand{\xth}{x_3}
\newcommand{\xn}{x_n}
\newcommand{\xij}{\xi_j}
\newcommand{\etaj}{\eta_j}
\newcommand{\xil}{\xi_\ell}
\newcommand{\ubfk}{\ubf_k}
\newcommand{\ubfko}{\ubf_{k,1}}
\newcommand{\ubfkt}{\ubf_{k,2}}
\newcommand{\xk}{x_k}
\newcommand{\yk}{y_k}
\newcommand{\partialxjxl}{\partial^2_{x_j,x_\ell}}
\newcommand{\partialxjoxjn}{\partial^d_{x^{j_1}_1,\dots, x_n^{j_n}}}
\newcommand{\Ttilde}{\widetilde T}
\newcommand{\Ttildeo}{\Ttilde_\alpha}
\newcommand{\Ttildeoo}{\Ttilde_{\alpha,1}}
\newcommand{\Ttildeot}{\Ttilde_{\alpha,2}}
\newcommand{\Ttildeoeps}{\Ttilde_{\alpha,\varepsilon}}
\newcommand{\Ttildet}{\Ttilde_\beta}
\newcommand{\Ttildekjlo}{\Ttilde_{k,j\ell,1}}
\newcommand{\Ttildekjlt}{\Ttilde_{k,j\ell,2}}
\newcommand{\varphihat}{\widehat \varphi}
\newcommand{\Cphirhoz}{C_{\varphi,\rho,0}}
\newcommand{\Cphirhoo}{C_{\varphi,\rho,1}}
\newcommand{\Cphirhokbf}{C_{\varphi,\rho,\kbf}}
\newcommand{\Boz}{B_1(0)}
\newcommand{\Omegaaeps}{\Omega_{a,\varepsilon}}
\newcommand{\jbf}{\mathbf j}
\newcommand{\nbfOmega}{\nbf_\Omega}
\newcommand{\rbf}{\mathbf r}
\newcommand{\alphabold}{\boldsymbol \alpha}
\newcommand{\omegabold}{\boldsymbol \omega}
\newcommand{\ellbf}{\mathbf\ell}
\newcommand{\kbf}{\mathbf k}
\newcommand{\jo}{j_1}
\newcommand{\jtw}{j_2}
\newcommand{\jth}{j_3}
\newcommand{\CP}{C_P}
\newcommand{\Gbf}{\mathbf G}
\newcommand{\Gbftilde}{\widetilde{\Gbf}}
\newcommand{\ftilde}{\widetilde f}
\title{\Large A Ne\v cas-Lions inequality with symmetric gradients on star-shaped domains
based on a first order Babu\v ska-Aziz inequality}
\author{\large{Michele Botti\thanks{MOX Department of Mathematics, Politecnico of Milano, 20133 Milano, Italy (michele.botti@polimi.it)},   
Lorenzo Mascotto\thanks{Department of Mathematics and Applications, University of Milano-Bicocca, 20125 Milan, Italy (lorenzo.mascotto@unimib.it); 
Faculty of Mathematics, University of Vienna, 1090 Vienna, Austria;
IMATI-CNR, 27100 Pavia, Italy}}}
\date{}
\begin{document}
\maketitle

\begin{abstract}
\noindent
We prove a Ne\v cas-Lions inequality with symmetric gradients
on two and three dimensional domains of diameter~$R$
that are star-shaped with respect
to a ball of radius~$\rho$;
the constants in the inequality are explicit
with respect to~$R$ and~$\rho$.
Crucial tools in deriving this inequality
are a first order Babu\v ska-Aziz inequality
based on Bogovskiĭ's construction
of a right-inverse of the divergence
and Fourier transform techniques proposed by Dur\'an.
As a byproduct, we derive arbitrary order estimates
in arbitrary dimension for that operator.

\medskip\noindent
\textbf{AMS subject classification}:
35C05; 35C15; 35S05; 42B20; 42B37; 53A45

\medskip\noindent
\textbf{Keywords}: 
Babu\v ska-Aziz inequality;
Ne\v cas-Lions inequality;
inf-sup condition;
Bogovskiĭ operator;
symmetric tensor.
\end{abstract}

\section{Introduction} \label{section:introduction}

We derive a Ne\v cas-Lions inequality with symmetric gradients
on star-shaped domains in two and three dimensions;
first order Babu\v ska-Aziz and (vector) Ne\v cas-Lions inequalities are crucial tools
to show estimates that are explicit
with respect to certain geometric quantities of the domain.
As a byproduct, we also derive explicit arbitrary order
Babu\v ska-Aziz inequalities in arbitrary dimension.

\paragraph*{Outline of the introduction.}
After introducing the functional setting and the domains of interest,
we review the literature and the main concepts related to
the lowest order Babu\v ska-Aziz and Ne\v cas-Lions inequalities
in Sections~\ref{subsection:standard-BA-ineq} and~\ref{subsection:standard-NL}.
In Sections~\ref{subsection:main-result1} and~\ref{subsection:main-result2},
we discuss the generalisation of these two results
to the first order case.
The Ne\v cas-Lions inequality with symmetric gradients,
based on all the foregoing results,
is shown in Section~\ref{subsection:main-result3}.
Finally, we describe the outline of the remainder of the paper.

\paragraph*{Functional spaces and notation.}
In what follows, $\nabla$, $\nabla\times$, and~$\div$ denote the
gradient, curl, and divergence operators.
The operators~$\nablaS$ and~$\nablaSS$ are
the symmetric and skew-symmetric parts of~$\nabla$:
\begin{equation} \label{splitting-gradient}
\nabla = \nablaS + \nablaSS.
\end{equation}
We use standard notation~\cite{Ern-Guermond:2021}
for Sobolev spaces on Lipschitz domains~$\Omega$
with boundary~$\partial \Omega$.
The outward unit normal vector to~$\partial \Omega$ is~$\nbfOmega$.
$H^s(\Omega)$ denotes the Sobolev space of order~$s\ge0$,
which we equip with inner product $(\cdot,\cdot)_s$,
seminorm~$\SemiNorm{\cdot}_{s,\Omega}$,
and norm~$\Norm{\cdot}_{s,\Omega}$.
The case~$s=0$ corresponds to~$H^0(\Omega)=L^2(\Omega)$.
The space of functions in $L^2(\Omega)$ with zero average
over~$\Omega$ is denoted by~$L^2_0(\Omega)$.

For~$s$ positive, we define $H^s_0(\Omega)$ as
the closure of $\Ccalzinf(\Omega)$ with respect to the $H^s(\Omega)$ norm.
In what follows, we shall particularly use the spaces~$H_0^1(\Omega)$
and~$H^2_0(\Omega)$,
which coincide~\cite{Meyers-Serrin:1964, Ciarlet:2002}
with the spaces of functions with zero trace,
and functions with zero trace and whose gradients have
zero trace over~$\partial\Omega$, respectively.

Negative order Sobolev spaces are defined by duality.
We introduce the spaces~$H^{-1}(\Omega) := [H^1_0(\Omega)]^*$
and~$H^{-2}(\Omega) := [H^2_0(\Omega)]^*$
equipped with the norms
\begin{equation} \label{negative-norms}
\Norm{u}_{-1,\Omega} 
: = \sup_{v\in H^1_0(\Omega)} \frac{_{-1}\langle u,v \rangle_1}{\SemiNorm{v}_{1,\Omega}},
\qquad\qquad\qquad
\Norm{u}_{-2,\Omega} 
: = \sup_{v\in H^2_0(\Omega)} \frac{_{-2}\langle u,v \rangle_2}{\SemiNorm{v}_{2,\Omega}},
\end{equation}
where~$_{-\ell}\langle \cdot,\cdot \rangle_\ell$
is the duality pairing between $H^{-\ell}(\Omega)$
and~$H^\ell_0(\Omega)$.

The definitions above extend to the case of
vector fields and tensors.
With an abuse of notation, the norms on scalar, vector fields,
and tensors are denoted with the same symbols.

For positive~$a$ and~$b$,
by~$a \lesssim b$, we shall occasionally mean
that there exists a positive constant~$c$
independent of relevant geometric parameters
such that~$a \le c \ b$.
An extra subscript makes it explicit
a hidden dependence on a parameter of interest.

\paragraph*{Domains of interest.}
Henceforth, $\Omega$ in~$\Rbb^n$
is a
\begin{equation}\label{Rrho}
\text{domain of diameter~$\hOmega$
that is star-shaped with respect to a ball $\Brho$ of radius~$\rho$}.
\end{equation}

\subsection{The lowest order Babu\v ska-Aziz inequality} \label{subsection:standard-BA-ineq}
The standard, lowest order version of the Babu\v ska-Aziz inequality
was proven as early as 1961 by Cattabriga~\cite{Cattabriga:1961}.
However, the name is associated to the authors
of the later work~\cite{Babuska-Aziz:1972}
and was assigned by Horgan and Payne~\cite{Horgan-Payne:1983};
see also~\cite{Costabel-Dauge:2015}.

The inequality reads as follows:
there exists a positive constant~$\CBAz$ such that
for any~$f$ in $L^2_0(\Omega)$
one can construct~$\ubf$ in $[H^1_0(\Omega)]^n$ satisfying
\begin{equation} \label{0th-Babu-Aziz}
\div \ubf = f,
\qquad\qquad\qquad
\SemiNorm{\ubf}_{1,\Omega}
\le \CBAz \Norm{f}_{0,\Omega}.
\end{equation}
The subscript in~$\CBAz$ relates to Babu\v ska-Aziz.
Explicit constructions of a vector field~$\ubf$
as in~\eqref{0th-Babu-Aziz}
may be performed in different ways;
here, we shall follow the approach by
Bogovskiĭ in~\cite{Bogovskii:1979, Bogovskii:1980},
where he showed a particular construction of the right inverse
of the divergence based on integral kernels.
Alternative avenues, which give less information on the constant~$\CBAz$
and construct less smooth right-inverses of the divergence,
are based on solving curl-div, diffusion, or Stokes problems;
see, e.g., \cite{Arnold-Scott-Vogelius:1988} and \cite[Lemma 11.2.3]{Brenner-Scott:2008};
as such, higher order estimates based on this approach
require extra regularity assumptions on the boundary of the domain,
which are instead not needed following Bogovskiĭ's approach.

\paragraph*{Minimal literature on the lowest order Babu\v ska-Aziz inequality.}
The literature associated with inequality~\eqref{0th-Babu-Aziz} is widespread.
We refer to \cite[pp. 227-228]{Galdi:2011}, \cite{Guzman-Salgado:2021},
and~\cite{Costabel-Dauge:2015} for a thorough historical review.

Here, we only mention that the divergence problem in~\eqref{0th-Babu-Aziz}
was raised as early as in 1961 by Cattabriga~\cite{Cattabriga:1961};
see also the later works~\cite{Ladyzhenskaya:1967, Necas:1967}.
Bogovskiĭ introduced an explicit representation for~$\vbf$ solving~\eqref{0th-Babu-Aziz}
in~\cite{Bogovskii:1979, Bogovskii:1980}.
Several references discuss the validity of similar estimates;
for instance, implicit constants for general Lipschitz domains are available
in \cite[Theorems~2.4 and~2.9]{Borchers-Sohr:1990}.
The case of negative Sobolev norms is described in~\cite{Geissert-Heck-Hieber:2006}.

The explicit dependence of the constant~$\CBAz$ on geometric parameters of~$\Omega$
is studied in fewer references;
see~\cite{Acosta-Duran:2017} for a list.
For~$R$ and~$\rho$ as in~\eqref{Rrho},
Galdi~\cite{Galdi:2011} gives estimates of the form
\begin{equation} \label{suboptimal-Galdi}
\CBAz \lesssim_n \left(\frac{\hOmega}{\rho}\right)^{n+1}.
\end{equation}
The main tool in the analysis is the one
discussed originally by Bogovskiĭ~\cite{Bogovskii:1979, Bogovskii:1980},
i.e., the Calder\'on-Zygmund singular integral operator theory.

Improved estimates of the form
\begin{equation} \label{explicit-estimates-Duran}
\CBAz \lesssim_n \frac{R}{\rho} 
    \left( \frac{\SemiNorm{\Omega}}{\vert \Brho \vert} \right) ^{\frac{n-2}{2(n-1)}}
    \left( \log \frac{\SemiNorm{\Omega}}{\vert \Brho \vert} \right)^{\frac{n}{2(n-1)}}
\end{equation}
were proven by Dur\'an~\cite{Duran:2012} based on the properties
of the Fourier transform.
In the same reference, it is shown that the estimates
are optimal up to the logarithmic factor for $n=2$.
More precisely, a 2D counterexample is exhibited showing that
the following holds true:
\[
\CBAz \gtrsim \frac{\hOmega}{\rho}.
\]
For the two dimensional case,
Costabel and Dauge~\cite[Theorem~2.3]{Costabel-Dauge:2015} proved
that the logarithmic factor in Dur\'an's estimates can be removed
again for $n=2$.

\paragraph*{Minimal literature on higher order Babu\v ska-Aziz inequalities.}
Higher order Babu\v ska-Aziz inequalities are far less investigated.
They are stated in the original paper by Bogovskiĭ~\cite{Bogovskii:1979}
without mention on the behaviour of the constants.
Galdi~\cite[Remark III.3.2]{Galdi:2011}
claims that similar bounds to the lowest order case can be derived;
however, no explicit constants are given in that case sa well;
the analysis hinges upon the Calder\'on-Zygmund theory.
Costabel and McIntosh prove arbitrary order estimates~\cite{Costabel-McIntosh:2010}
without explicit dependence on the geometry on the domain.
Guzm\'an and Salgado~\cite{Guzman-Salgado:2021}
prove an explicit first order generalised Poincar\'e inequality,
which is related to Bogovskiĭ's operator
without imposition of boundary conditions,
and give a road map on how to prove higher order explicit estimates;
tools as those in~\cite{Duran:2012} are employed;
no estimates are given for Bogovskiĭ's operator.

\subsection{The lowest order Ne\v cas-Lions inequality}
\label{subsection:standard-NL}
The standard, lowest order Ne\v cas-Lions inequality
is a very well known result in the theory of Sobolev spaces.
It is proven in the book by Ne\v cas \cite[Lemma 3.7.1]{Necas:1967};
the connection to the name of Lions is less clear,
and is probably due to~\cite[Note 27, page 320]{Magenes-Stampacchia:1958},
where the result is mentioned as a private communication
by Lions himself to Magenes and Stampacchia, yet without an explicit proof.

Given~$\Piz: L^1(\Omega) \to \Rbb$ the average operator over~$\Omega$,
the inequality reads as follows:
there exists a positive constant~$\CNLz$ such that
\begin{equation} \label{0th-Necas-Lions}
\Norm{f -\Piz f}_{0,\Omega}
\le \CNLz \Norm{\nabla f}_{-1,\Omega} 
 \qquad\qquad \forall f \in L^2(\Omega).
\end{equation}
The subscript in~$\CNLz$ relates to Ne\v cas-Lions.
An equivalent statement for~\eqref{0th-Necas-Lions}
is that the following constants are bounded from above
and below, respectively:
\begin{equation} \label{standard:Necas-Lions-pre}
\CNLz:= \sup_{f \in L^2(\Omega)}
        \frac{\Norm{f -\Piz f}_{0,\Omega}}{\Norm{\nabla f}_{-1,\Omega}},
\qquad\qquad
\CNLz^{-1}:= \inf_{f \in L^2(\Omega)}
        \frac{\Norm{\nabla f}_{-1,\Omega}}{\Norm{f -\Piz f}_{0,\Omega}}.
\end{equation}
The constants~$\CBAz$ and~$\CNLz$
in~\eqref{0th-Babu-Aziz} and~\eqref{0th-Necas-Lions} are related
to constants in other relevant inequalities in Sobolev spaces
as well, including the standard inf-sup constant~$\betaz$
defined as
\begin{equation} \label{0th-inf-sup}
\inf_{f\in L^2_0(\Omega)}\sup_{\ubf \in [H^1_0(\Omega)]^n}
        \frac{(\div\ubf,f)_{0,\Omega}}{\Norm{\ubf}_{1,\Omega} \Norm{f}_{0,\Omega}}
        =: \betaz.
\end{equation}
\begin{proposition} \label{proposition:relation-constants-0th}
Let~$\CBAz$, $\CNLz$, and~$\betaz$ be given
in~\eqref{0th-Babu-Aziz}, \eqref{0th-Necas-Lions},
and~\eqref{0th-inf-sup}.
Then, the following holds true:
\[
\CBAz \ge \CNLz = \betaz^{-1}.
\]
\end{proposition}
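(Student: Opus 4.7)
To prove this, the plan is to treat the two parts separately: the identity~$\CNLz = \betaz^{-1}$ is a duality computation, while the bound~$\CBAz \ge \CNLz$ follows by plugging the Babu\v ska-Aziz right-inverse into the supremum defining the negative norm. For the identity, I would start from~\eqref{negative-norms}: density of~$\Ccalzinf(\Omega)$ in~$H^1_0(\Omega)$ and integration by parts rewrite the duality pairing~$_{-1}\langle \nabla f, \vbf \rangle_1$ as~$-(f, \div \vbf)_{0,\Omega}$ for every test~$\vbf \in [H^1_0(\Omega)]^n$. Since~$\Piz f$ is constant and~$\vbf$ vanishes on~$\partial\Omega$, the divergence theorem gives~$(\Piz f, \div \vbf)_{0,\Omega} = 0$; absorbing the sign in the supremum yields
\[
\Norm{\nabla f}_{-1,\Omega} = \sup_{\vbf \in [H^1_0(\Omega)]^n} \frac{(f - \Piz f, \div \vbf)_{0,\Omega}}{\SemiNorm{\vbf}_{1,\Omega}}.
\]
Substituting this identity into the second formula of~\eqref{standard:Necas-Lions-pre} and renaming $g := f - \Piz f$ (which sweeps out all of~$L^2_0(\Omega)$ as~$f$ ranges over~$L^2(\Omega)$) identifies~$\CNLz^{-1}$ with the inf-sup constant~$\betaz$ in~\eqref{0th-inf-sup}.

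For the bound~$\CBAz \ge \CNLz$, I would fix~$f \in L^2(\Omega)$, set~$g = f - \Piz f$, and invoke~\eqref{0th-Babu-Aziz} to produce~$\ubf \in [H^1_0(\Omega)]^n$ with~$\div \ubf = g$ and~$\SemiNorm{\ubf}_{1,\Omega} \le \CBAz \Norm{g}_{0,\Omega}$. Testing the supremum defining~$\Norm{\nabla f}_{-1,\Omega}$ against this specific~$\ubf$ yields the lower bound
\[
\Norm{\nabla f}_{-1,\Omega} \ge \frac{(g, \div \ubf)_{0,\Omega}}{\SemiNorm{\ubf}_{1,\Omega}} = \frac{\Norm{g}_{0,\Omega}^2}{\SemiNorm{\ubf}_{1,\Omega}} \ge \CBAz^{-1} \Norm{g}_{0,\Omega},
\]
i.e., $\Norm{f - \Piz f}_{0,\Omega} \le \CBAz \Norm{\nabla f}_{-1,\Omega}$, which is exactly~$\CNLz \le \CBAz$.

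I do not anticipate any real obstacle: both parts reduce to a classical duality argument combined with the Babu\v ska-Aziz right inverse. The only point that deserves a line of care is the norm convention, since~\eqref{0th-inf-sup} is stated with the full~$H^1$ norm~$\Norm{\ubf}_{1,\Omega}$ while~\eqref{negative-norms} and~\eqref{0th-Babu-Aziz} use the seminorm~$\SemiNorm{\cdot}_{1,\Omega}$; the two agree on~$H^1_0(\Omega)$ up to Poincar\'e's constant, and the clean equality~$\CNLz = \betaz^{-1}$ corresponds to reading the denominator in~\eqref{0th-inf-sup} as the seminorm, consistently with the normalisation in~\eqref{0th-Babu-Aziz} and~\eqref{negative-norms}.
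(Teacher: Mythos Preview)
Your proof is correct and follows essentially the same route as the paper: both parts rely on the duality identity $_{-1}\langle \nabla f,\vbf\rangle_1 = -(f,\div\vbf)_{0,\Omega}$ to rewrite the inf-sup quotient as $\Norm{\nabla f}_{-1,\Omega}/\Norm{f-\Piz f}_{0,\Omega}$, and then plug the Babu\v ska-Aziz right-inverse into the supremum to obtain $\betaz\ge\CBAz^{-1}$. Your remark on the seminorm versus full norm in~\eqref{0th-inf-sup} is well taken; the paper's own proof silently uses the seminorm convention from~\eqref{negative-norms} in that denominator, exactly as you suggest.
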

\begin{proof}
For all~$f$ in~$L^2_0(\Omega)$, the definition
of negative norms in~\eqref{negative-norms} implies
\[
\sup_{\ubf \in [H^1_0(\Omega)]^n}
        \frac{(\div\ubf,f)_{0,\Omega}}{\Norm{\ubf}_{1,\Omega} \Norm{f}_{0,\Omega}}
= \sup_{\ubf \in [H^1_0(\Omega)]^n}
        \frac{_{-1}\langle \nabla f, \ubf \rangle_1}{\Norm{\ubf}_{1,\Omega} \Norm{f}_{0,\Omega}}
= \frac{\Norm{\nabla f}_{-1,\Omega}}{\Norm{f}_{0,\Omega}} . 
\]
We take the $\inf$ over all possible~$f$ in~$L^2_0(\Omega)$,
use~\eqref{standard:Necas-Lions-pre},
and deduce that~$\betaz = \CNLz^{-1}$.

On the other hand, for all~$f$ in~$L^2_0(\Omega)$,
we can consider a specific~$\ubf$ satisfying~\eqref{0th-Babu-Aziz},
which gives
\[
\sup_{\ubf \in [H^1_0(\Omega)]^n}
        \frac{(\div\ubf,f)_{0,\Omega}}{\Norm{\ubf}_{1,\Omega} \Norm{f}_{0,\Omega}}
\ge \frac{\Norm{f}_{0,\Omega}^2}{\Norm{\ubf}_{1,\Omega} \Norm{f}_{0,\Omega}}
\ge \CBAz^{-1}. 
\]
Taking the inf over all possible $f$ in $L^2_0(\Omega)$
and recalling the standard inf-sup condition~\eqref{0th-inf-sup}
give $\betaz \ge \CBAz^{-1}$.
The assertion follows.
\end{proof}

Since an upper bound on~$\CBAz$ is available from Lemma~\ref{0th-Babu-Aziz},
which is explicit in terms of~$n$, $R$, and~$\rho$ as in~\eqref{Rrho},
then Proposition~\ref{proposition:relation-constants-0th}
implies an upper bound for~$\CNLz$
and a lower bound for~$\betaz$
with the same explicit dependence.
The relation with constants appearing in other inequalities
is discussed, amongst others,
in~\cite{Amrouche-Ciarlet-Mardare:2015, Ciarlet:2002, Costabel-Dauge:2015, Horgan-Payne:1983}.

\subsection{Main result~$1$: a first order Babu\v ska-Aziz inequality} \label{subsection:main-result1}
An important tool in the proof of Theorem~\ref{theorem:new-Necas-Lions-symmetric-gradients} below
is the proof of a Babu\v ska-Aziz inequality,
based on first order estimates for Bogovskiĭ's construction
of the right-inverse of the divergence.
More precisely,
there exist positive constants~$\CBAoA$ and~$\CBAoB$
such that for all~$f$ in~$H^1_0(\Omega) \cap L^2_0(\Omega)$,
one can construct~$\ubf$ in~$[H^2_0(\Omega)]^n$ satisfying
\begin{equation} \label{1st-Babu-Aziz}
    \div\ubf = f,
    \qquad\qquad\qquad
    \SemiNorm{\ubf}_{2,\Omega}
    \le \CBAoA \Norm{f}_{0,\Omega} 
        + \CBAoB \SemiNorm{f}_{1,\Omega}.
\end{equation}
We state the result here and postpone its proof
to Section~\ref{section:2nd-Bogovskii} below.

\begin{theorem}[A first order Babu\v ska-Aziz inequality] \label{theorem:1st-Babu-Aziz}
Let~$\ubf$ and~$f$ be as in~\eqref{1st-Babu-Aziz},
and~$\Omega$, $\Brho$, $R$, and~$\rho$ be as in~\eqref{Rrho}.
Then, inequality~\eqref{1st-Babu-Aziz} holds true with
\begin{equation} \label{estimates:Bogo-constants}
\CBAoA \lesssim \frac{R}{\rho^2}
            \left[ 1 + \left( \frac{\SemiNorm{\Omega}}{\SemiNorm{\Brho}} \right)^{\frac{n-2}{2(n-1)}}
            \left( \log\frac{\SemiNorm{\Omega}}{\SemiNorm{\Brho}} \right)^{\frac{n}{2(n-1)}}
            \right],
\qquad\qquad\qquad
\CBAoB \lesssim \frac{R}{\rho}.
\end{equation}
\end{theorem}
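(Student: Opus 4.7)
The plan is to base the proof on Bogovski\u{i}'s explicit integral representation of a right-inverse of the divergence, and then to estimate the second-order seminorm of the resulting $\ubf$ by combining integration by parts with the Fourier-analytic singular-integral estimates of Dur\'an~\cite{Duran:2012}. Concretely, I would fix a mollifier $\omega \in \Ccalzinf(\Brho)$ with $\int \omega = 1$ and $\Norm{\partial^k \omega}_{\infty,\Brho} \lesssim \rho^{-n-k}$ for $k \in \{0,1,2\}$ and set $\ubf = \Gbf f$, where $\Gbf$ is the classical Bogovski\u{i} operator associated to $\omega$, of the form
\[
\ubf_k(x) = \int_{\Omega} G_k(x,y)\, f(y) \, \dy,
\]
so that $\div \ubf = f$ for $f \in L^2_0(\Omega)$ and $\ubf$ is supported in $\overline{\Omega}$ by the star-shapedness of $\Omega$ with respect to $\Brho$.

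Before differentiating, I would check that $\ubf \in [H^2_0(\Omega)]^n$. This reduces to showing that $\ubf$ and $\nabla \ubf$ vanish along $\partial\Omega$ in the sense of traces: the former follows from the standard analysis of the support of the Bogovski\u{i} kernel, while the latter additionally exploits the vanishing trace of $f$. To bound $\SemiNorm{\ubf}_{2,\Omega}$, I would differentiate the kernel twice in $x$ under the integral sign; each $x$-derivative applied to $G_k(x,y)$ can be rewritten, using the specific translation-type dependence of the kernel, as a $y$-derivative up to a commutator involving $\nabla\omega$. Integrating the $y$-derivative by parts---legitimate because $f \in H^1_0(\Omega)$---transfers one derivative off the kernel onto $f$, leaving a representation of $\partialxjxl \ubf_k$ as the sum of a second-order singular-integral operator of the same family studied by Dur\'an applied to $\partial f$, plus commutator pieces involving $\nabla\omega$ and $\nabla^2 \omega$ acting on $f$ and $\nabla f$.

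For the principal piece, Dur\'an's Fourier decomposition exhibits a Calder\'on--Zygmund operator whose $L^2$-norm, tracked sharply in the geometric data, produces the $(R/\rho)\,\SemiNorm{f}_{1,\Omega}$ contribution accounting for $\CBAoB$. For the commutator pieces, the normalisation of $\omega$ brings in factors $\rho^{-1}$ or $\rho^{-2}$ which, combined with Dur\'an-type bounds for the companion singular operators acting now on $f\in L^2$, reproduce the bracketed geometric factor with logarithmic correction in $\CBAoA$. Collecting all contributions yields~\eqref{estimates:Bogo-constants}.

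The hard part, I expect, is tracking the explicit dependence of the Fourier-multiplier norms on the ratios $R/\rho$ and $\SemiNorm{\Omega}/\SemiNorm{\Brho}$ through the second-order calculus, and in particular identifying the correct decomposition into a principal part that receives only the $R/\rho$ factor (when applied to $\nabla f$) and lower-order pieces that absorb the full Dur\'an factor (when applied to $f$). A separate technical difficulty is to justify the integration by parts despite the non-integrable singularity of the once-differentiated kernel on the diagonal; a standard approximation-and-truncation argument, combined with the vanishing trace of $f$, should suffice, but some care is required.
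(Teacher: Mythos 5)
Your overall strategy (Bogovski\u{i}'s kernel plus Dur\'an's Fourier techniques) matches the paper's, but the specific decomposition you propose --- rewriting each $x$-derivative of the kernel as a $y$-derivative plus a commutator in $\nabla\omega$ and integrating by parts in $y$ --- does not, as described, deliver the constant $\CBAoB \lesssim R/\rho$. After one such integration by parts you are left with (a derivative of) the \emph{full} first-order Bogovski\u{i}--Dur\'an operator acting on $\partial_j f \in L^2(\Omega)$. Dur\'an's bound for that operator splits the $t$-integral at $t=1/2$: the singular part $t\in(0,\tfrac12)$ indeed contributes only $O(R/\rho)\,\Norm{\partial_j f}_{0,\Omega}$, but the regular part $t\in(\tfrac12,1)$ is estimated by H\"older/Young with the optimised exponent $p$ and carries the factor $\left(\SemiNorm{\Omega}/\SemiNorm{\Brho}\right)^{\frac{n-2}{2(n-1)}}\left(\log \SemiNorm{\Omega}/\SemiNorm{\Brho}\right)^{\frac{n}{2(n-1)}}$ against the $L^2$-norm of its argument. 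Applied to $\partial_j f$, this yields $\CBAoB \lesssim \frac{R}{\rho}\left(\SemiNorm{\Omega}/\SemiNorm{\Brho}\right)^{\frac{n-2}{2(n-1)}}\left(\log\cdot\right)^{\frac{n}{2(n-1)}}$, which is strictly weaker than the claimed bound (even for $n=2$ the logarithm survives). You flag exactly this as ``the hard part'' --- identifying a principal part that receives only $R/\rho$ when applied to $\nabla f$ --- but the proposal contains no mechanism that achieves it.

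The paper's mechanism is different and is the crux of the proof. No real-space integration by parts is performed for the norm estimates. Instead, after splitting the kernel as $\tfrac{x_k-y_k}{t}\,\omega = \varphi\bigl(y+\tfrac{x-y}{t}\bigr) - y_k\,\omega\bigl(y+\tfrac{x-y}{t}\bigr)$ with $\varphi(z)=z_k\omega(z)$, both $x$-derivatives are kept on the kernel and the $t$-integral is split at $t=\tfrac12$. For the singular part $\Ttildeo$, the transfer of one derivative onto $f$ happens \emph{only on the Fourier side}, via the algebraic identity $2\pi{\rm i}\,\xil = 2\pi{\rm i}(1-t)\xil + 2\pi{\rm i}\,t\,\xil$ in Lemma~\ref{lemma:1-Ttilde1}, which distributes the factor between $\widehat{\partial_{x_\ell}g}((1-t)\xi)$ and $\widehat{\partial_{x_\ell}\varphi}(t\xi)$; the resulting coefficient of $\SemiNorm{f}_{1,\Omega}$ is $\Cphirhoz\approx\rho^{-1}$ (times $\SemiNorm{y_k}\le R$), with no volume or logarithmic factor. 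The regular part $\Ttildet$ keeps both derivatives on the mollifier and acts on $f$ (or $y_kf$) in $L^2$ only, which is the sole source of the Dur\'an factor and lands entirely in $\CBAoA$ with the scaling $\rho^{-2}$. The commutator identity you invoke is in fact Proposition~\ref{proposition:Galdi} in Appendix~\ref{appendix:alternative-zero-trace}, but the paper uses it only to verify the boundary conditions $\ubf\in[H^2_0(\Omega)]^n$, not to derive the estimates. To repair your argument you would have to undo the integration by parts on the regular part $t>\tfrac12$ (putting both derivatives back on the kernel), at which point you have reproduced the paper's decomposition.
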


We provide the reader with some comments on the optimality
of the constant~$\CBAoA$ in~\eqref{estimates:Bogo-constants}
in Section~\ref{subsection:counterexample} below.

\subsection{Main result~$2$: a first order Ne\v cas-Lions inequality} \label{subsection:main-result2}
Introduce the space $H^{-1}(\Omega) / \Rbb$,
which is the space~$H^{-1}(\Omega)$ equipped with the norm
\[
\Norm{f}_{H^{-1}(\Omega) / \Rbb}
:= \inf_{c \in \Rbb} \Norm{f-c}_{-1,\Omega}.
\]
Recall the negative norms in~\eqref{negative-norms}.
We discuss a first order Ne\v cas-Lions inequality (for vectors):
there exists a positive constant~$\CNLo$ such that
\begin{equation} \label{1st-Necas-Lions}
\Norm{f}_{H^{-1}(\Omega) / \Rbb}
\le \CNLo \Norm{\nabla f}_{-2,\Omega}
\qquad\qquad \forall f \in H^{-1}(\Omega)/ \Rbb.
\end{equation}
An equivalent statement for~\eqref{1st-Necas-Lions}
is that the following constants are bounded from above
and below, respectively:
\begin{equation} \label{CNLo}
\CNLo= \sup_{f \in H^{-1}(\Omega)/ \Rbb} 
           \frac{\Norm{f}_{H^{-1}(\Omega) / \Rbb}}
                {\Norm{\nabla f}_{-2,\Omega}},
\qquad\qquad
\CNLo^{-1} = \inf_{f \in H^{-1}(\Omega)/ \Rbb} 
           \frac{\Norm{\nabla f}_{-2,\Omega}} 
            {\Norm{f}_{H^{-1}(\Omega) / \Rbb}}.
\end{equation}
The constants~$\CBAoA$ and~$\CBAoB$, and~$\CNLo$
in~\eqref{1st-Babu-Aziz} and~\eqref{estimates:Bogo-constants},
and~\eqref{1st-Necas-Lions} are related
to constants in other relevant inequalities in Sobolev spaces
as well, including the first order inf-sup constant~$\betao$
defined as
\begin{equation} \label{1st-inf-sup}
\inf_{f \in H^{-1}(\Omega) / \Rbb}
\sup_{\ubf \in [H^2_0(\Omega)]^n}
\frac{_{-1}\langle f, \divbf \ubf \rangle_{1}}{\Norm{f}_{H^{-1}(\Omega) / \Rbb} \SemiNorm{\ubf}_{2,\Omega}}
=: \betao
\end{equation}
and the positive constant~$\CP$ appearing in the Poincar\'e inequality
\begin{equation} \label{Poincare}
\Norm{f}_{0,\Omega}
\le \CP R \SemiNorm{f}_{1,\Omega}
\qquad\qquad \forall f \in H^1_0(\Omega).
\end{equation}
The constant~$\CP$ is independent of~$R$ and~$\rho$ in~\eqref{Rrho};
see, e.g., \cite[Section~3.3]{Ern-Guermond:2021}.

The following result is the first order version of Proposition~\ref{proposition:relation-constants-0th}.
\begin{proposition} \label{proposition:1st-Necas-Lions}
Let~$\CBAoA$ and~$\CBAoB$, $\CNLo$, $\betao$, and~$\CP$ be given
in~\eqref{1st-Babu-Aziz} and~\eqref{estimates:Bogo-constants},
\eqref{1st-Necas-Lions}, \eqref{1st-inf-sup}, and~\eqref{Poincare}.
Then, the following holds true:
\begin{equation} \label{CBo-le-CNLo}
\CBAoA \CP R+\CBAoB \ge \CNLo = \betao^{-1}.
\end{equation}
\end{proposition}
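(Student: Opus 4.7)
The plan is to mirror the structure of the proof of Proposition~\ref{proposition:relation-constants-0th}, treating the equality $\CNLo = \betao^{-1}$ first and the inequality $\CBAoA \CP R + \CBAoB \ge \CNLo$ second.

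For the equality, I would integrate by parts in the duality pairing: for $f \in H^{-1}(\Omega)/\Rbb$ and $\ubf \in [H^2_0(\Omega)]^n$, the identity
\[
{}_{-2}\langle \nabla f, \ubf \rangle_{2} = -{}_{-1}\langle f, \divbf \ubf \rangle_{1}
\]
holds and, crucially, is independent of the representative of $f$ because $\divbf \ubf \in H^1_0(\Omega)\cap L^2_0(\Omega)$ (the zero mean follows from $\ubf_{|\partial\Omega}=0$ and the divergence theorem). Taking the sup over $\ubf$ recovers $\Norm{\nabla f}_{-2,\Omega}$ in view of \eqref{negative-norms}; dividing by $\Norm{f}_{H^{-1}(\Omega)/\Rbb}$ and taking the infimum over $f$ then matches the second expression in \eqref{CNLo} with the definition \eqref{1st-inf-sup} of $\betao$, yielding $\CNLo^{-1}=\betao$.

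For the inequality, I would use the dual representation of the quotient norm,
\[
\Norm{f}_{H^{-1}(\Omega)/\Rbb} = \sup_{g\in H^1_0(\Omega)\cap L^2_0(\Omega)} \frac{{}_{-1}\langle f, g\rangle_{1}}{\SemiNorm{g}_{1,\Omega}},
\]
which holds because $H^1_0(\Omega)\cap L^2_0(\Omega)$ is exactly the annihilator of the constants in $H^1_0(\Omega)$. For each admissible $g$, Theorem~\ref{theorem:1st-Babu-Aziz} produces a $\ubf \in [H^2_0(\Omega)]^n$ with $\divbf\ubf = g$ and $\SemiNorm{\ubf}_{2,\Omega}\le \CBAoA \Norm{g}_{0,\Omega}+\CBAoB \SemiNorm{g}_{1,\Omega}$; combining this with the Poincar\'e inequality~\eqref{Poincare} applied to $g\in H^1_0(\Omega)$ gives
\[
\SemiNorm{\ubf}_{2,\Omega} \le (\CBAoA \CP R + \CBAoB)\SemiNorm{g}_{1,\Omega}.
\]
Substituting this particular $\ubf$ into the inner sup of~\eqref{1st-inf-sup}, then taking the sup over $g$ and finally the inf over $f$, I recover $\betao \ge (\CBAoA \CP R+\CBAoB)^{-1}$, which together with the previously obtained equality gives~\eqref{CBo-le-CNLo}.

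The main technical point to watch is the well-definedness of the pairings on the quotient $H^{-1}(\Omega)/\Rbb$: both $_{-1}\langle f,\divbf \ubf\rangle_1$ and $_{-1}\langle f,g\rangle_1$ must be insensitive to adding a constant to $f$, which is precisely why the arguments produced on the right-hand side (namely $\divbf\ubf$ and $g$) have to be taken with zero mean. Beyond this bookkeeping and the correct invocation of the Poincar\'e constant to mix the two Babu\v ska-Aziz constants in~\eqref{estimates:Bogo-constants} into a single bound, the argument is a direct first-order adaptation of the duality chain used in Proposition~\ref{proposition:relation-constants-0th}.
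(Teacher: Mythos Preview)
Your proposal is correct and follows essentially the same duality argument as the paper: integrate by parts to identify the inner supremum in~\eqref{1st-inf-sup} with $\Norm{\nabla f}_{-2,\Omega}/\Norm{f}_{H^{-1}(\Omega)/\Rbb}$, giving $\betao=\CNLo^{-1}$; then restrict the supremum to those $\ubf$ produced by the first-order Babu\v ska--Aziz construction and use~\eqref{Poincare} to merge the two constants. You are in fact a little more careful than the paper about the quotient-space bookkeeping---explicitly restricting the test functions to $H^1_0(\Omega)\cap L^2_0(\Omega)$ so that both the pairing on $H^{-1}(\Omega)/\Rbb$ and the hypothesis of Theorem~\ref{theorem:1st-Babu-Aziz} are satisfied---whereas the paper writes the final supremum over all of $H^1_0(\Omega)$; your version is the cleaner one.
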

\begin{proof}
For all~$f$ in $H^{-1}(\Omega) / \Rbb$,
an integration by parts and the definition
of negative Sobolev norms in~\eqref{negative-norms} imply
\[
\sup_{\ubf \in [H^2_0(\Omega)]^{n}}
\frac{_{-1}\langle f, \div \ubf \rangle_{1}}{\Norm{f}_{H^{-1}(\Omega) / \Rbb}
        \SemiNorm{\ubf}_{2,\Omega}}
= \sup_{\ubf \in [H^2_0(\Omega)]^{n}}
\frac{_{-2}\langle \nabla f, \ubf \rangle_{2}}{\Norm{f}_{H^{-1}(\Omega) / \Rbb} \SemiNorm{\ubf}_{2,\Omega}}
=: \frac{\Norm{\nabla f}_{-2,\Omega}}{\Norm{f}_{H^{-1}(\Omega) / \Rbb}}.
\]
We take the infimum over all such possible~$f$
and exploit the identities
\[
\betao
\overset{\eqref{1st-inf-sup}}{=}
\inf_{f \in H^{-1}(\Omega) / \Rbb}
\frac{\Norm{\nabla f}_{-2,\Omega}}{\Norm{f}_{H^{-1}(\Omega) / \Rbb}}
= \left( \sup_{f \in H^{-1}(\Omega) / \Rbb}
    \frac{\Norm{\nabla f}_{-2,\Omega}}{\Norm{f}_{H^{-1}(\Omega) / \Rbb}} \right)^{-1}
\overset{\eqref{CNLo}}{=} \CNLo^{-1},
\]
which implies $\betao=\CNLo^{-1}$.

On the other hand, \eqref{1st-Babu-Aziz} guarantees for all~$\ftilde$
in~$H^1_0(\Omega)$ the existence
of~$\ubf$ in $[H^2_0(\Omega)]^{n}$ such that
\[
\div \ubf = \ftilde,
\qquad\qquad
\SemiNorm{\ubf}_{2,\Omega}
\le \CBAoA \Norm{\ftilde}_{0,\Omega}
   + \CBAoB \SemiNorm{\ftilde}_{1,\Omega}.
\]
This and the Poincar\'e inequality~\eqref{Poincare} give
\[
\begin{split}
& \sup_{\ubf \in [H^2_0(\Omega)]^{n}}
\frac{_{-1}\langle f, \div \ubf \rangle_{1}}{\Norm{f}_{H^{-1}(\Omega) / \Rbb} \SemiNorm{\ubf}_{2,\Omega}}
 \ge \sup_{\ftilde \in H^1_0(\Omega)}
\frac{_{-1}\langle f, \ftilde \rangle_{1}}
    {\Norm{f}_{H^{-1}(\Omega) / \Rbb}
    [\CBAoA \Norm{\ftilde}_{0,\Omega} + \CBAoB \SemiNorm{\ftilde}_{1,\Omega}]} \\
& \ge \sup_{\ftilde \in H^1_0(\Omega)}
\frac{_{-1}\langle f, \ftilde \rangle_{1}}
    {(\CBAoA\CP R+\CBAoB)\Norm{f}_{H^{-1}(\Omega) / \Rbb}
    \SemiNorm{\ftilde}_{1,\Omega}}
\overset{\eqref{negative-norms}}{=}
(\CBAoA\CP R+\CBAoB)^{-1}.
\end{split}
\]
We take the infimum over all $f$ in $H^{-1}(\Omega) / \Rbb$,
recall the first order inf-sup condition~\eqref{1st-inf-sup},
and deduce $\betao \ge (\CBAoA\CP R+\CBAoB)^{-1}$.
The assertion follows.
\end{proof}

Since an upper bound on~$\CBAoA$ and~$\CBAoB$ is available from Theorem~\ref{theorem:1st-Babu-Aziz},
which is explicit in terms of~$n$, $R$, and~$\rho$ as in~\eqref{Rrho},
then Proposition~\ref{proposition:1st-Necas-Lions}
implies an upper bound for~$\CNLo$
and a lower bound for~$\betao$
with the same explicit dependence.
For more general Ne\v cas-Lions inequalities,
yet with unknown constants,
see~\cite{Amrouche-Girault:1994} and the references therein.

\subsection{Main result~$3$: a Ne\v cas-Lions inequality with symmetric gradients} \label{subsection:main-result3}
The spaces~$\RM(\Omega)$ of rigid body motions
in two and three dimensions have
cardinality~$3$ and~$6$, and are given by
\[
\RM(\Omega)
:= \begin{cases}
\left\{ \rbf(x) = \alphabold + b \mathbf (x_2,-x_1)^{\mathrm T}
\text{ for any }
\alphabold \in \Rbb^2,\; b \in \Rbb \right\} & \text{ in $2$D}\\
\left\{ \rbf(x) = \alphabold +\omegabold \times (x_1,x_2,x_3)^{\mathrm T}
\text{ for any }
\alphabold, \omegabold \in \Rbb^3 \right\}  & \text{ in $3$D}.
\end{cases}
\]
Let~$\PiRM$ denote the $L^2(\Omega)$ projection onto~$\RM(\Omega)$.
We further introduce the space of symmetric tensors
\[
\Sigmabold
:=\{  \taubold \in H(\div,\Omega)  \mid \taubold \text{ is symmetric} \},
\]
which we endow with the norm
\begin{equation} \label{Sigmabold-norm}
\Norm{\taubold}_{\Sigmabold}^2
:=  \Norm{\taubold}_{0,\Omega}^2
    + R^2 \Norm{\div\taubold}_{0,\Omega}^2 .
\end{equation}
Note that
\begin{equation} \label{symmetric-tensors-property}
_{-1}\langle \nabla \vbf,\taubold \rangle_1
= _{-1}\langle \nablaS \vbf,\taubold \rangle_1 
\qquad\qquad
\forall \vbf \in [L^2(\Omega)]^2,\;
    \taubold\in\Sigmabold.
\end{equation}
We state a Ne\v cas-Lions inequality with symmetric gradients
on two and three dimensional domains,
which is explicit in terms of~$R$, $\rho$, and~$n$ as in~\eqref{Rrho}.

We state the inequality here
and postpone its proof to Section~\ref{section:NL-sg} below.    

\begin{theorem}[A Ne\v cas-Lions inequality with symmetric gradients] \label{theorem:new-Necas-Lions-symmetric-gradients}
There exists a positive constant~$\CNLzstar$
depending only on~$n$, $R$, and~$\rho$ as in~\eqref{Rrho}
through~$\CNLz$ in~\eqref{0th-Necas-Lions},
$\CBAoA$ and~$\CBAoB$ in~\eqref{1st-Babu-Aziz},
and~$\CP$ in~\eqref{Poincare}, such that
\begin{equation} \label{Necas-Lions-symmetric-tensors}
\begin{split}
\Norm{\vbf -\PiRM \vbf}_{0,\Omega}
& \le \CNLzstar \Norm{\nablaS \vbf}_{-1,\Omega}  \\
& \le \CNLz \left[ 1+\sqrt2 \left(\CBAoA\CP R+\CBAoB\right) \right] 
    \Norm{\nablaS \vbf}_{-1,\Omega} 
\qquad \forall \vbf \in [L^2(\Omega)]^n.
\end{split}
\end{equation}
\end{theorem}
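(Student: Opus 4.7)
The strategy is to reduce the claim to the scalar Ne\v cas-Lions inequality~\eqref{0th-Necas-Lions} and to Proposition~\ref{proposition:1st-Necas-Lions}, combined with the Korn-type identity
\[
\partial_i (\nablaSS \vbf)_{jk} = \partial_j (\nablaS \vbf)_{ik} - \partial_k (\nablaS \vbf)_{ij},
\]
which expresses derivatives of the skew-symmetric gradient as differences of derivatives of the symmetric one. A subtle point is that $\PiRM$ is the $L^2$-projection, which does not a priori interact well with the $H^{-1}(\Omega)/\Rbb$ quotient norm used in Proposition~\ref{proposition:1st-Necas-Lions}; this motivates replacing $\PiRM \vbf$ by a different rigid body motion. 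By $L^2$-optimality, $\|\vbf - \PiRM\vbf\|_{0,\Omega} \le \|\vbf - \rbf\|_{0,\Omega}$ for every $\rbf \in \RM(\Omega)$, so such a replacement only tightens the bound.

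First, I would pick $\rbf \in \RM(\Omega)$ so that $\hat\vbf := \vbf - \rbf$ satisfies two orthogonality conditions: $(a)$ $\int_\Omega \hat\vbf = \mathbf{0}$; and $(b)$ the optimal constants minimising $\|q_\ell - c\|_{-1,\Omega}$ over $c \in \Rbb$ vanish, where $q_1, \dots, q_{\binom{n}{2}}$ are the scalar components parametrising the skew-symmetric tensor $\nablaSS \hat\vbf$. The $n + \binom{n}{2}$ scalar conditions $(a)$--$(b)$ pin down the $n + \binom{n}{2}$ free parameters of $\rbf$: $(a)$ fixes the $n$ translations, and $(b)$ fixes the $\binom{n}{2}$ rotations by solving a small linear system whose coefficients involve the Poisson solution $\Psi \in H^1_0(\Omega)$ of $-\Delta \Psi = 1$ and the coordinate functions.

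With $\hat\vbf$ in hand, property $(a)$ allows applying the scalar Ne\v cas-Lions inequality~\eqref{0th-Necas-Lions} componentwise, while $\nablaS \hat\vbf = \nablaS \vbf$ since rigid body motions have vanishing symmetric gradient. Using~\eqref{splitting-gradient} and the triangle inequality, the proof reduces to establishing
\[
\|\nablaSS \hat\vbf\|_{-1,\Omega}
\le \sqrt{2}\, (\CBAoA \CP R + \CBAoB)\, \|\nablaS \vbf\|_{-1,\Omega}.
\]
The Korn identity above, combined with a two-step integration by parts against test tensors in $[H^2_0(\Omega)]^{n\times n}$, yields $\|\nabla (\nablaSS \hat\vbf)\|_{-2,\Omega} \le \|\nablaS \vbf\|_{-1,\Omega}$ componentwise. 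Applying Proposition~\ref{proposition:1st-Necas-Lions} to each $q_\ell$ then controls the $H^{-1}(\Omega)/\Rbb$ quotient norm; property $(b)$ is precisely what promotes this to a bound on the full $H^{-1}(\Omega)$ norm, and the factor $\sqrt{2}$ enters through the tensor-vs-scalar seminorm ratio in the parametrisation of skew tensors.

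The main obstacle is the rigorous implementation of $(b)$: one must verify that the $\binom{n}{2}$ rotational degrees of freedom of $\rbf$ are in bijection with the $\binom{n}{2}$ optimal constants selected by Proposition~\ref{proposition:1st-Necas-Lions}, which amounts to the non-degeneracy of a matrix whose entries are integrals of $\Psi$ against coordinate functions. Once this step is handled, the rest is a direct accumulation of the estimates from~\eqref{0th-Necas-Lions} and Proposition~\ref{proposition:1st-Necas-Lions}, yielding the explicit constant $\CNLzstar = \CNLz\left[1 + \sqrt{2}(\CBAoA \CP R + \CBAoB)\right]$ stated in~\eqref{Necas-Lions-symmetric-tensors}.
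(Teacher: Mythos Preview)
Your strategy is essentially the one in the paper: replace $\PiRM\vbf$ by a well-chosen rigid body motion, apply the zeroth-order Ne\v cas--Lions inequality~\eqref{0th-Necas-Lions} componentwise, split the gradient via~\eqref{splitting-gradient}, and control the skew-symmetric part through Proposition~\ref{proposition:1st-Necas-Lions} together with an identity expressing derivatives of $\nablaSS\vbf$ in terms of derivatives of $\nablaS\vbf$. The paper phrases that identity via the curl (Lemma~\ref{lemma:gradcurl-2curlgradsym}, namely $[\nabla(\nabla\times\vbf)]^T = 2\,\nabla\times(\nablaS\vbf)$), while you write it in index form; the two are equivalent, and both routes feed into Proposition~\ref{proposition:1st-Necas-Lions} in the same way.

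Where you work harder than needed is the ``main obstacle''. The paper bypasses any Poisson solution or linear-system argument by a direct observation: $\nablaSS$ maps the rotational part of $\RM(\Omega)$ \emph{bijectively} onto the space of constant skew-symmetric $n\times n$ matrices. Consequently, whatever constant skew tensor $\cbf$ realises the infimum of $\Norm{\nablaSS\vbf-\cbf}_{-1,\Omega}$ is automatically the skew gradient of some $\qbfRM$ in $\RM(\Omega)\cap[L^2_0(\Omega)]^n$, and one simply takes that $\qbfRM$; there is no non-degeneracy to check. With this shortcut your conditions~(a) and~(b) are immediate, and the remainder of your outline---the bound $\Norm{\nablaSS(\vbf-\qbfRM)}_{-1,\Omega}\le\sqrt{2}\,(\CBAoA\CP R+\CBAoB)\,\Norm{\nablaS\vbf}_{-1,\Omega}$ followed by collection of constants---matches the paper's Section~\ref{section:NL-sg}.
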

Since upper bounds on~$\CBAoA$ and~$\CBAoB$,
and $\CNLz$ are available
from Lemma~\ref{1st-Babu-Aziz},
and Proposition~\ref{proposition:relation-constants-0th}
and display~\eqref{explicit-estimates-Duran},
which are explicit in terms of~$n$, $R$, and~$\rho$ as in~\eqref{Rrho},
(bounds on~$\CP$ are standard)
then Theorem~\ref{theorem:new-Necas-Lions-symmetric-gradients}
implies an upper bound for~$\CNLzstar$
and a lower bound for~$\betao$
with the same explicit dependence.
Roughly speaking,
Theorem~\ref{theorem:new-Necas-Lions-symmetric-gradients}
is a Korn-type version
of the standard lowest order Ne\v cas-Lions inequality.

Introduce the spaces
\[
\widetilde{\Sigmabold}
:= \{ \taubold \in \Sigmabold \mid \langle\taubold\ \nbf,
        \boldsymbol{w}\rangle_{\partial\Omega}=0 
        \quad\forall\boldsymbol{w}\in [H^1(\Omega)]^n\};
\qquad\qquad
\widetilde{\Vbf}
:=  \{\vbf\in[L^2(\Omega)]^n\mid \PiRM\vbf = 0\}.
\]
A consequence of Theorem~\ref{theorem:new-Necas-Lions-symmetric-gradients}
is an inf-sup condition,
which is of great importance in the analysis of the mixed
(Hellinger-Reissner) formulation
of linear elasticity problems:
there exists a positive constant~$\betastarz$
such that
\begin{equation} \label{inf-sup:symmetric-tensors}
\begin{split}
\inf_{\vbf\in\widetilde{\Vbf}}\ \sup_{\taubold\in\widetilde{\Sigmabold}} \frac{(\divbf \taubold, \vbf)_{0,\Omega}}{\Norm{\taubold}_{\Sigmabold} \Norm{\vbf}_{0,\Omega}}
=: \betastarz R^{-1}.
\end{split}
\end{equation}
\begin{proposition} \label{proposition:inf-sup:symmetric-tensors}
Let~$\CNLzstar$ and~$\betastarz$
be given in~\eqref{Necas-Lions-symmetric-tensors}
and~\eqref{inf-sup:symmetric-tensors}.
Then, the inf-sup condition~\eqref{inf-sup:symmetric-tensors} holds true with
\[
\betastarz \ge (\CNLzstar)^{-1}(1+\CP^2)^{-\frac12}.
\]
\end{proposition}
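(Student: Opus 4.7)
The plan is to combine Theorem~\ref{theorem:new-Necas-Lions-symmetric-gradients} with a duality argument based on smooth, symmetric, compactly supported test tensors. Fix $\vbf \in \widetilde{\Vbf}$, so that $\PiRM\vbf=0$. The Ne\v cas--Lions inequality with symmetric gradients immediately yields
\[
\Norm{\vbf}_{0,\Omega} \le \CNLzstar \Norm{\nablaS\vbf}_{-1,\Omega}.
\]
Expanding the negative norm via \eqref{negative-norms}, $\Norm{\nablaS\vbf}_{-1,\Omega}$ is a supremum of the duality pairing $_{-1}\langle \nablaS \vbf, \taubold\rangle_1$ over $\taubold \in [H^1_0(\Omega)]^{n\times n}$. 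Since $\nablaS\vbf$ is symmetric, identity \eqref{symmetric-tensors-property} allows one to restrict this supremum to symmetric test tensors at no cost: the skew-symmetric part of any $\taubold$ pairs to zero, while symmetrization does not increase the $H^1$ seminorm.

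For symmetric $\taubold \in [H^1_0(\Omega)]^{n\times n}$ and $\vbf \in [L^2(\Omega)]^n$, integration by parts rewrites the duality pairing as $-(\vbf, \divbf\taubold)_{0,\Omega}$; flipping the sign of $\taubold$ gives
\[
\Norm{\vbf}_{0,\Omega} \le \CNLzstar \sup_{\taubold} \frac{(\divbf\taubold, \vbf)_{0,\Omega}}{\SemiNorm{\taubold}_{1,\Omega}},
\]
where the supremum is taken over symmetric $\taubold \in [H^1_0(\Omega)]^{n\times n}$. Such tensors have vanishing boundary trace, hence satisfy $\langle \taubold\,\nbf,\boldsymbol{w}\rangle_{\partial\Omega} = 0$ for every $\boldsymbol{w}\in [H^1(\Omega)]^n$ and thus belong to $\widetilde{\Sigmabold}$, so the supremum only grows when enlarged to $\widetilde{\Sigmabold}$.

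Finally, I would convert the $H^1$ seminorm in the denominator into the $\Sigmabold$ norm defined in \eqref{Sigmabold-norm}. For $\taubold \in [H^1_0(\Omega)]^{n\times n}$, the Poincar\'e inequality \eqref{Poincare} applied entrywise gives $\Norm{\taubold}_{0,\Omega} \le \CP R\SemiNorm{\taubold}_{1,\Omega}$, while a direct application of Cauchy--Schwarz to the definition of the divergence yields $R\Norm{\divbf\taubold}_{0,\Omega} \le R\SemiNorm{\taubold}_{1,\Omega}$. Summing the squared estimates delivers
\[
\Norm{\taubold}_{\Sigmabold}^2 \le (1+\CP^2) R^2 \SemiNorm{\taubold}_{1,\Omega}^2,
\]
whence $\SemiNorm{\taubold}_{1,\Omega}^{-1} \le R\sqrt{1+\CP^2}\,\Norm{\taubold}_{\Sigmabold}^{-1}$. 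Combining this with the inclusion in $\widetilde{\Sigmabold}$ leads to
\[
\Norm{\vbf}_{0,\Omega} \le \CNLzstar R\sqrt{1+\CP^2}\, \sup_{\taubold \in \widetilde{\Sigmabold}} \frac{(\divbf\taubold,\vbf)_{0,\Omega}}{\Norm{\taubold}_{\Sigmabold}}.
\]
Dividing by $\Norm{\vbf}_{0,\Omega}$ and taking the infimum over $\vbf \in \widetilde{\Vbf}$ yields $\betastarz R^{-1} \ge (\CNLzstar R\sqrt{1+\CP^2})^{-1}$ via \eqref{inf-sup:symmetric-tensors}, which is the claim.

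The argument is essentially a sequence of bookkeeping steps, and the main point of care is not a deep obstacle but two verifications that go in the right direction: (i) restricting to symmetric test tensors is cost-free thanks to \eqref{symmetric-tensors-property}, and (ii) $[H^1_0(\Omega)]^{n\times n}$ (intersected with symmetric tensors) is contained in $\widetilde{\Sigmabold}$, so enlarging the supremum is legitimate.
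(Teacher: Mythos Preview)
Your proof is correct and follows essentially the same route as the paper's: both arguments hinge on the Ne\v cas--Lions inequality with symmetric gradients, the restriction of the supremum to symmetric $[H^1_0(\Omega)]^{n\times n}$ test tensors via~\eqref{symmetric-tensors-property} and the bound $\SemiNorm{\taubold_S}_{1,\Omega}\le\SemiNorm{\taubold}_{1,\Omega}$, integration by parts, and the conversion $\Norm{\taubold}_{\Sigmabold}\le (1+\CP^2)^{1/2}R\,\SemiNorm{\taubold}_{1,\Omega}$ through Poincar\'e and $\Norm{\divbf\taubold}_{0,\Omega}\le\SemiNorm{\taubold}_{1,\Omega}$. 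The only cosmetic difference is the direction of the chain: the paper starts from the definition of~$\betastarz$ and shrinks the sup to reach $\Norm{\nablaS\vbf}_{-1,\Omega}$, whereas you start from $\Norm{\vbf}_{0,\Omega}\le\CNLzstar\Norm{\nablaS\vbf}_{-1,\Omega}$ and enlarge the sup to reach~$\widetilde{\Sigmabold}$.
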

\begin{proof}
For all tensors~$\taubold$, let~$\taubold_S$
denote its symmetric part.
We have
\[
\begin{split}
\betastarz
& = \inf_{\vbf\in\widetilde{\Vbf}}\ \sup_{\taubold\in\widetilde{\Sigmabold}} 
        \frac{(\divbf \taubold, \vbf)_{0,\Omega}}{\Norm{\taubold}_{\Sigmabold} \Norm{\vbf}_{0,\Omega}}
  \ge \inf_{\vbf\in\widetilde{\Vbf}}\ 
    \sup_{\taubold\in\Sigmaboldtilde \cap [H^1_0(\Omega)]^{n\times n}} 
        \frac{(\divbf \taubold, \vbf)_{0,\Omega}}{\Norm{\taubold}_{\Sigmabold} \Norm{\vbf}_{0,\Omega}}\\
& \overset{\text{IBP}}{=}
    \inf_{\vbf\in\widetilde{\Vbf}}\
    \sup_{\taubold\in\Sigmaboldtilde \cap [H^1_0(\Omega)]^{n\times n}} 
        \frac{_{-1}\langle \nabla \vbf, \taubold \rangle_1}{\Norm{\taubold}_{\Sigmabold} \Norm{\vbf}_{0,\Omega}}
\overset{\eqref{symmetric-tensors-property}}{=}
    \inf_{\vbf\in\widetilde{\Vbf}}\ 
    \sup_{\taubold\in\Sigmaboldtilde \cap [H^1_0(\Omega)]^{n\times n}} 
        \frac{_{-1}\langle \nablaS \vbf, \taubold \rangle_1}{\Norm{\taubold}_{\Sigmabold} \Norm{\vbf}_{0,\Omega}}\\
& \overset{\eqref{Sigmabold-norm}}{\ge}
    \inf_{\vbf\in\widetilde{\Vbf}}\ 
    \sup_{\taubold\in\Sigmaboldtilde \cap [H^1_0(\Omega)]^{n\times n}} 
        \frac{_{-1}\langle \nablaS \vbf, \taubold \rangle_1}
        {({\Norm{\taubold}_{0,\Omega}^2 + R^{2}\SemiNorm{\taubold}_{1,\Omega}^2)^\frac12} \Norm{\vbf}_{0,\Omega}}\\
& \overset{\eqref{Poincare}}\ge 
     \inf_{\vbf\in\widetilde{\Vbf}}\ 
    \sup_{\taubold\in\Sigmaboldtilde \cap [H^1_0(\Omega)]^{n\times n}} 
        \frac{_{-1}\langle \nablaS \vbf,  \taubold \rangle_1}{(1+\CP^2)^\frac12 R \SemiNorm{\taubold}_{1,\Omega}
        \Norm{\vbf}_{0,\Omega}}\\
& = \inf_{\vbf\in\widetilde{\Vbf}}\ 
    \sup_{\taubold\in [H^1_0(\Omega)]^{n\times n}} 
        \frac{_{-1}\langle \nablaS \vbf,  \taubold_S \rangle_1}{(1+\CP^2)^\frac12 R \SemiNorm{\taubold_S}_{1,\Omega}
        \Norm{\vbf}_{0,\Omega}}.
\end{split}
\]
Since $\SemiNorm{\taubold_S}_{1,\Omega}
\le \SemiNorm{\taubold}_{1,\Omega}$
for all tensors~$\taubold$
and the numerator involves~$\nablaS\vbf$,
we deduce the assertion:
\[
\begin{split}
\betastarz
& \ge \inf_{\vbf\in\widetilde{\Vbf}}\ 
    \sup_{\taubold\in [H^1_0(\Omega)]^{n\times n}} 
        \frac{_{-1}\langle \nablaS \vbf, \taubold \rangle_1}{(1+\CP^2)^\frac12 R \SemiNorm{\taubold}_{1,\Omega}
        \Norm{\vbf}_{0,\Omega}} \\
&  \overset{\eqref{negative-norms}}{=}
    (1+\CP^2)^{-\frac12}R^{-1}
  \inf_{\vbf\in\widetilde{\Vbf}}
        \frac{\Norm{\nablaS \vbf}_{-1,\Omega}}{\Norm{\vbf}_{0,\Omega}}
  \overset{\eqref{Necas-Lions-symmetric-tensors}}{\ge}
  (1+\CP^2)^{-\frac12}R^{-1} (\CNLzstar)^{-1} .
\end{split}
\]
\end{proof}

\paragraph*{Outline of the remainder of the paper.}
In Section~\ref{section:2nd-Bogovskii}, we prove Theorem~\ref{theorem:1st-Babu-Aziz},
whereas in Section~\ref{section:NL-sg}, we prove Theorem~\ref{theorem:new-Necas-Lions-symmetric-gradients}.
We also prove an arbitrary order version
of Theorem~\ref{theorem:1st-Babu-Aziz}
in Appendix~\ref{appendix:general-order-BA}.

\section{Proof of a first order Babu\v ska-Aziz inequality} \label{section:2nd-Bogovskii}
In this section, we prove Theorem~\ref{theorem:1st-Babu-Aziz} in several steps
and further discuss the optimality of the bounds on the constants therein.
To this aim, we follow Bogovskiĭ's construction~\cite{Bogovskii:1979}
of a right-inverse  of the divergence
and generalise Dur\'an's analysis~\cite{Duran:2012} to the first order case.

\paragraph*{Explicit construction of Bogovskiĭ's right-inverse of the divergence.}
\begin{subequations}\label{eq:bogovskii}
Consider~$\omega$ in~$\Ccalzinf(\Omega)$ with
\[
\int_\Omega \omega(x) \dx = 1,
\qquad\qquad\qquad
\supp(\omega) \subset \Brho .
\]
Given
\begin{equation}
\label{eq:kernel_def}
\Gbf:\Omega\times\Omega \to \Rbb^n,
 \qquad\qquad
 \Gbf(x,y)
 := \int_0^1 \frac{x-y}{t} \  \omega \left( y + \frac{x-y}{t}  \right) \frac{\dt}{t^n},
\end{equation}
we define
\begin{equation}\label{eq:Bogo_rightinv}
\ubf(x) := \int_\Omega  \Gbf(x,y) f(y) \dy.
\end{equation}
\end{subequations}

\subsection{Preliminary results} \label{subsection:preliminary-results}
We recall basic properties of the Fourier transform.
Given~$f$ in~$L^1(\Rbbn)$, we define its Fourier transform as
\begin{equation} \label{Fourier-transform}
\fhat(\xi) 
:= \int_{\Rbbn} e^{-2\pi {\rm i} x \cdot \xi} f(x) \dx.
\end{equation}
If~$f$ is in~$L^2(\Omega)$, we have the isometry
\begin{equation} \label{isometry:Fourier}
\Norm{f}_{0,\Rbbn} = \Norm{\fhat}_{0,\Rbbn}
\end{equation}
and the following property on the derivatives of the Fourier transform:
\begin{equation} \label{Fourier-derivative-property}
    \widehat{\partialxj f} (\xi)
    = 2\pi {\rm i} \xij \fhat(\xi)
    \qquad\qquad\forall j=1,\dots,n.
\end{equation}
We consider the following splitting of each component~$k$,
$k=1,\dots,n$, of~$\ubf$:
\[
\begin{split}
\ubfk
& :=\ubfko-\ubfkt \\
& := \int_0^1\int_{\Rbbn} \left(\yk + \frac{\xk-\yk}{t}\right)  
        \omega \left( y + \frac{x-y}{t}  \right) f(y) \dy \frac{\dt}{t^n}\\
& \quad - \int_0^1\int_{\Rbbn} \yk \
        \omega \left( y + \frac{x-y}{t}  \right) f(y) \dy \frac{\dt}{t^n} .
\end{split}
\]
In order to take derivatives of~$\ubfk$,
it is convenient to take a limit
in the sense of distributions \cite[Section~2]{Duran:2012}:
\[
\begin{split}
\ubfko
& = \lim_{\varepsilon\to 0}
 \int_\varepsilon^1\int_{\Rbbn} \left(\yk + \frac{\xk-\yk}{t}\right)  
        \omega \left( y + \frac{x-y}{t}  \right) f(y) \dy \frac{\dt}{t^n},\\
\ubfkt
& = \lim_{\varepsilon\to0} \int_\varepsilon^1\int_{\Rbbn} \yk \
        \omega \left( y + \frac{x-y}{t}  \right) f(y) \dy \frac{\dt}{t^n}.
\end{split}
\]
By doing that, we can interchange the derivative with the limit $\varepsilon \to 0$
and then pass with the derivative under the integral symbol;
in fact, the functions under the integral are in~$L^1$ and admit~$L^1$ derivatives.
\medskip

We take the second derivatives of~$\ubfko$ and~$\ubfkt$
with respect to the $j$-th and $\ell$-th directions
(without loss of generality we assume $j$ different from~$\ell$),
and get
\begin{equation} \label{splitting:Ttilde-2nd-derivative}
(\partialxjxl \ubfk)(x)
= [\Ttildekjlo (f(y)) - \Ttildekjlt (\yk f(y))](x).
\end{equation}
In~\eqref{splitting:Ttilde-2nd-derivative}, given~$\Ttilde$ any of the two operators
$\Ttildekjlo$ and~$\Ttildekjlt$, we let
\[
\Ttilde (g)(x)
:= \lim_{\varepsilon\to0}
  \int_{\varepsilon}^1 \int_{\Rbbn} \partialxjxl
        \left[ \varphi\left( y+ \frac{x-y}{t} \right) \right]
        g(y) \dy \frac{\dt}{t^n},
\]
where, for all $j,\ell=1,\dots,n$, we have set
\begin{equation} \label{options-varphi}
g(y) :=
\begin{cases}
f(y)        & \text{if } \Ttilde = \Ttildekjlo, \\
y_k f(y)    & \text{if } \Ttilde = \Ttildekjlt,
\end{cases}
\qquad\qquad
\varphi(x) :=
\begin{cases}
\xk \omega(x)   & \text{if } \Ttilde = \Ttildekjlo, \\
\omega(x)       & \text{if } \Ttilde = \Ttildekjlt.
\end{cases}
\end{equation}
In the forthcoming sections,
we shall prove the continuity of the operators
in~\eqref{splitting:Ttilde-2nd-derivative}.
To this aim, we henceforth fix~$j$ and~$\ell$, and consider the decomposition
\begin{equation} \label{def:Ttilde}
\Ttilde g
:= \Ttildeo g + \Ttildet g,
\end{equation}
where
\begin{equation} \label{def:Ttildeo}
\Ttildeo g(x)
:= \lim_{\varepsilon\to0}
  \int_{\varepsilon}^{\frac12} \int_{\Rbbn} \partialxjxl
        \left[ \varphi\left( y+ \frac{x-y}{t} \right) \right]
        g(y) \dy \frac{\dt}{t^n}
\end{equation}
and
\begin{equation} \label{def:Ttildet}
\Ttildet g(x)
:= \int_{\frac12}^1 \int_{\Rbbn} \partialxjxl
        \left[ \varphi\left( y+ \frac{x-y}{t} \right) \right]
        g(y) \dy \frac{\dt}{t^n}.
\end{equation}
The continuity estimates will follow
summing over all~$j$ and~$\ell$.

\subsection{Continuity of~$\Ttildeo$}
\label{subsection:continuity-Ttildeo}
We discuss the continuity of the operator in~\eqref{def:Ttildeo}.
We proceed in several steps.
First, we prove some properties of the Fourier transform
of~$\Ttildeo (g)$.

\begin{lemma} \label{lemma:1-Ttilde1}
If~$g$ in~\eqref{options-varphi} belongs to~$\Ccalzinf(\Rbbn)$,
then the following identity is valid:
\begin{equation} \label{splitting:Ttildeoo+Ttildeot}
\begin{split}
\widehat{\Ttildeo g}(\xi)
& = (2\pi {\rm i}  \xij) 
   \lim_{\varepsilon\to 0} 
   \left[ \int_{\varepsilon}^{\frac12}
    \varphihat\left( t \ \xi \right)
    \widehat{\partialxl g}(\xi) \dt
    + \int_{\varepsilon}^{\frac12}
    \widehat{\partialxl \varphi} (t \xi)
    \ghat((1-t)\xi) \dt \right]\\
& =: \widehat{\Ttildeoo g}(\xi)
    + \widehat{\Ttildeot g}(\xi).
\end{split}
\end{equation}
\end{lemma}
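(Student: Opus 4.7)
The plan is to establish \eqref{splitting:Ttildeoo+Ttildeot} in two stages: (i) an integration by parts in $y$ that transfers one spatial derivative from $\varphi$ onto $g$, naturally producing the two pieces $\Ttildeoo$ and $\Ttildeot$; (ii) an explicit Fourier transform of each piece in the variable $x$, based on the affine dependence of $u(x,y) := y + (x-y)/t$ on $x$.

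For step~(i), I would observe that the chain rule applied to $u(x,y) = x/t + y(1-1/t)$ yields the identity $\partialxl \varphi(u) + \partial_{y_\ell} \varphi(u) = (\partial_\ell \varphi)(u)$, since $\partialxl u_k + \partial_{y_\ell} u_k = \delta_{k\ell}(1/t + 1 - 1/t)$. Using this to eliminate $\partialxl \varphi(u)$ and then applying $\partialxj$ to the result gives
\[
\partialxjxl \varphi(u)
= \frac{1}{t} (\partial_j \partial_\ell \varphi)(u)
- \frac{1}{t} \partial_{y_\ell}\bigl[(\partial_j \varphi)(u)\bigr].
\]
Multiplying by $g(y)$, integrating over $y \in \Rbbn$, and integrating by parts in $y$ in the second term (no boundary contribution, since $g \in \Ccalzinf(\Rbbn)$) produces the decomposition
\[
\int_{\Rbbn} \partialxjxl \varphi(u)\, g(y) \dy
= \frac{1}{t}\int_{\Rbbn} (\partial_j \partial_\ell \varphi)(u)\, g(y) \dy
+ \frac{1}{t}\int_{\Rbbn} (\partial_j \varphi)(u)\, \partialxl g(y) \dy.
\]

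For step~(ii), a direct computation via the linear substitution $w = y + (x-y)/t$ (equivalently $x = tw + (1-t)y$, with $dx = t^n dw$) yields the building block
\[
\int_{\Rbbn} e^{-2\pi {\rm i} x \cdot \xi}\, \Phi(y + (x-y)/t) \dx
= t^n\, e^{-2\pi {\rm i} (1-t) y \cdot \xi}\, \widehat{\Phi}(t\xi)
\]
for any Schwartz $\Phi$ and fixed $y,t$. I would apply this with $\Phi = \partial_j \partial_\ell \varphi$ and $\Phi = \partial_j \varphi$, integrate the resulting $y$-integrals against $g$ and $\partialxl g$ respectively (to produce $\ghat$ and $\widehat{\partialxl g}$ at the appropriate rescaled frequency), combine with the $\frac{\dt}{t^{n+1}}$ weighting inherited from step~(i), and pull out the common prefactor $2\pi {\rm i}\, \xij$ using $\widehat{\partial_j F}(t\xi) = 2\pi {\rm i}\, t\, \xij\, \widehat F(t\xi)$. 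The residual powers of $t$ cancel term by term, and the resulting two integrands, integrated in $t$ over $(\varepsilon, \tfrac12)$, are exactly $\widehat{\Ttildeoo g}(\xi)$ and $\widehat{\Ttildeot g}(\xi)$ as claimed in \eqref{splitting:Ttildeoo+Ttildeot}.

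The main technical obstacle I expect is to justify that the distributional limit $\varepsilon \to 0$ commutes with the Fourier transform in $x$ and with the $y$- and $t$-integrations. For every fixed $\varepsilon > 0$ all integrands are smooth and compactly supported in $(x, y)$, so Fubini legitimises every swap and the pointwise computation above is unambiguous; the convergence as $\varepsilon \to 0$ is taken in $\mathcal S'(\Rbbn)$, and commutes with $\mathcal F$ by continuity of the Fourier transform on tempered distributions. With this justification in place, the identity is first proved for $g \in \Ccalzinf(\Rbbn)$ as in the statement and then extends by density to the function classes needed in the subsequent continuity estimates for $\Ttildeo$.
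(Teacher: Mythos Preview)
Your argument is correct and reaches the same identity as the paper, but the decomposition is organised differently. The paper first passes both $x$-derivatives to the Fourier side (producing the factor $(2\pi{\rm i}\,\xij)(2\pi{\rm i}\,\xil)$ outright), performs the same change of variables $z=y+(x-y)/t$, and only then splits algebraically via $\xil=(1-t)\xil+t\,\xil$, identifying $(1-t)\xil\,\ghat((1-t)\xi)$ with $\widehat{\partialxl g}((1-t)\xi)$ and $t\,\xil\,\varphihat(t\xi)$ with $\widehat{\partialxl\varphi}(t\xi)$. You instead split on the real side, using the chain-rule identity $\partial_{x_\ell}+\partial_{y_\ell}=(\partial_\ell\,\cdot\,)$ applied to $\varphi(u)$ together with one integration by parts in $y$, and Fourier-transform each piece afterwards. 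Both routes are short; yours is more hands-on and makes the mechanism (one derivative migrating from $\varphi$ to $g$) explicit, which generalises transparently to the higher-order version in Appendix~\ref{appendix:general-order-BA}, while the paper's algebraic split is terser once the product $\varphihat(t\xi)\,\ghat((1-t)\xi)$ is in hand. One small note: your computation (correctly) yields $\widehat{\partialxl g}((1-t)\xi)$ in the first integral, in agreement with the paper's \emph{proof}; the argument $\xi$ printed in the displayed statement~\eqref{splitting:Ttildeoo+Ttildeot} is a typo.
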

\begin{proof}
By definition, we write
\[
\Ttildeo g(x) =
\lim_{\varepsilon\to 0} \Ttildeoeps g(x).
\]
For all positive~$\varepsilon$, we write
\[
\widehat{\Ttildeoeps g}(\xi)
= \int_{\Rbbn} \int_{\varepsilon}^{\frac12} 
    \int_{\Rbbn} \partialxjxl
        \left[ \varphi\left( y+ \frac{x-y}{t} \right) \right]
        e^{-2 \pi {\rm i} x\cdot \xi} 
        g(y) \dy \frac{\dt}{t^n} \dx.
\]
Due to the regularity of~$g$ and~$\varphi$,
the integral exists.
Therefore, we can change the order of the integrals,
integrate by parts twice,
use the change of variable $z=y+(x-y)/t$,
the definition of the Fourier transform~\eqref{Fourier-transform} twice,
and~\eqref{Fourier-derivative-property},
recall that the support of~$\varphi$ is compact,
and arrive at
\[
\begin{split}
& \widehat{\Ttildeoeps g}(\xi)  
 = (2\pi {\rm i}  \xij) (2\pi {\rm i}  \xil)
   \int_{\varepsilon}^{\frac12}  \int_{\Rbbn} 
    \int_{\Rbbn} \varphi\left( y+ \frac{x-y}{t} \right)
        g(y) e^{-2 \pi {\rm i} x\cdot \xi} 
        \dx\dy \frac{\dt}{t^n} \\
& = (2\pi {\rm i}  \xij) (2\pi {\rm i}  \xil)
   \int_{\varepsilon}^{\frac12}  \int_{\Rbbn} 
    \int_{\Rbbn} \varphi\left( z \right)
        e^{-2 \pi {\rm i} (t \ z +(1-t) y)\cdot \xi}  g(y)
        \dz \dy \dt \\
& = (2\pi {\rm i}  \xij) (2\pi {\rm i}  \xil)
   \int_{\varepsilon}^{\frac12}  \int_{\Rbbn} 
    \varphihat\left( t \ \xi \right)
        e^{-2 \pi {\rm i} (1-t) y \cdot \xi}  g(y)
        \dy \dt \\
& = (2\pi {\rm i} \xij) (2\pi {\rm i} \xil)
   \int_{\varepsilon}^{\frac12}
    \varphihat\left( t \ \xi \right)
        \ghat((1-t)\xi) \dt \\
& = (2\pi {\rm i}  \xij) 
   \int_{\varepsilon}^{\frac12}
    \varphihat\left( t \ \xi \right)
    [2\pi {\rm i} (1-t) \xil] \ghat((1-t)\xi) \dt
    + (2\pi {\rm i}  \xij) 
   \int_{\varepsilon}^{\frac12}
    \varphihat\left( t \ \xi \right)
    [2\pi {\rm i} t \xil] \ghat((1-t)\xi) \dt \\
& = (2\pi {\rm i}  \xij) 
   \int_{\varepsilon}^{\frac12}
    \varphihat\left( t \ \xi \right)
    \widehat{\partialxl g}((1-t)\xi) \dt
    + (2\pi {\rm i}  \xij) 
   \int_{\varepsilon}^{\frac12}
    \widehat{\partialxl \varphi} (t \xi)
    \ghat((1-t)\xi) \dt .
\end{split}
\]
The assertion follows taking the limit $\varepsilon\to 0$.
\end{proof}

Next, we prove a technical result.
\begin{lemma} \label{lemma:2-Ttildeo}
Let~$\varphi$ be any of the two options in~\eqref{options-varphi}.
Then, the two following inequalities hold true:
\begin{subequations} \label{2.3Duran}
\begin{equation} \label{2.3DuranA}
2\pi \SemiNorm{\xij} \int_0^\infty \SemiNorm{\varphihat(t\xi)} \dt
\le \Cphirhoz
:= \rho^{-1} \Norm{\varphi}_{L^1(\Rbbn)}
  + \rho \Norm{\partialxjj \varphi}_{L^1(\Rbbn)}
  \quad \forall j=1,\dots,n,
\end{equation}
\begin{equation} \label{2.3DuranB}
2\pi \SemiNorm{\xij} \int_0^\infty 
        \SemiNorm{\widehat{\partialxl \varphi}(t\xi)} \dt
\le \Cphirhoo
:= \rho^{-1} \Norm{\partialxl \varphi}_{L^1(\Rbbn)}
  + \rho \Norm{\partialxjjl \varphi}_{L^1(\Rbbn)}
  \quad \forall j=1,\dots,n .
\end{equation}
\end{subequations}
\end{lemma}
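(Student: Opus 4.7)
The plan is to establish both~\eqref{2.3DuranA} and~\eqref{2.3DuranB} by a single frequency-split argument, balancing the trivial $L^1$ bound on $\varphihat$ against the decay a second-order derivative confers through~\eqref{Fourier-derivative-property}. Since the left-hand side of both estimates vanishes when $\xij=0$, I can assume throughout that $\xij\neq 0$.

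The two pointwise bounds I would use along the ray $t\mapsto t\xi$ are
\[
\SemiNorm{\varphihat(t\xi)}\le \Norm{\varphi}_{L^1(\Rbbn)},
\qquad
(2\pi t\SemiNorm{\xij})^2\SemiNorm{\varphihat(t\xi)}
= \SemiNorm{\widehat{\partialxjj\varphi}(t\xi)}
\le \Norm{\partialxjj\varphi}_{L^1(\Rbbn)},
\]
the second obtained by applying~\eqref{Fourier-derivative-property} twice in the $j$-th coordinate. I would then split $\int_0^\infty=\int_0^{t_0}+\int_{t_0}^\infty$ at the balanced threshold $t_0:=(2\pi\rho\SemiNorm{\xij})^{-1}$, using the first bound on $(0,t_0)$ and the second on $(t_0,\infty)$, together with $\int_{t_0}^\infty t^{-2}\dt=t_0^{-1}$. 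After multiplying the sum by $2\pi\SemiNorm{\xij}$ and substituting the chosen value of $t_0$, the low-$t$ contribution collapses to $\rho^{-1}\Norm{\varphi}_{L^1(\Rbbn)}$ and the high-$t$ contribution to $\rho\Norm{\partialxjj\varphi}_{L^1(\Rbbn)}$, reproducing exactly the two terms of $\Cphirhoz$ and yielding~\eqref{2.3DuranA}.

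The second inequality~\eqref{2.3DuranB} would follow by repeating the argument verbatim with $\varphi$ replaced by $\partialxl\varphi$: the trivial $L^1$ bound contributes $\Norm{\partialxl\varphi}_{L^1(\Rbbn)}$ and the second-derivative bound contributes $\Norm{\partialxjjl\varphi}_{L^1(\Rbbn)}$, matching precisely $\Cphirhoo$.

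I do not expect any serious obstacle. The only non-routine ingredient is the choice of the threshold $t_0$: the scaling $t_0\sim(\rho\SemiNorm{\xij})^{-1}$ is forced by the physical scale of the support of $\varphi$, which is contained in $\Brho$, and any other splitting would unbalance the powers of $\rho$ that appear in $\Cphirhoz$ and $\Cphirhoo$. The resulting bounds are the natural weighted $L^1$-in-$t$ estimates one expects from a smooth kernel whose mother function lives at the scale $\rho$.
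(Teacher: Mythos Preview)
Your argument is correct and is precisely the standard proof of \cite[Lemma~2.3]{Duran:2012}, which the paper cites for~\eqref{2.3DuranA}; your derivation of~\eqref{2.3DuranB} by substituting~$\partialxl\varphi$ for~$\varphi$ is exactly what the paper does as well. In short, you have supplied the details the paper omits by reference, with the same method.
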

\begin{proof}
The proof of~\eqref{2.3DuranA} is given in~\cite[Lemma~2.3]{Duran:2012}
and is therefore omitted here.
On the other hand, inequality~\eqref{2.3DuranB}
may be shown as an application of~\eqref{2.3DuranA}
to $\partialxil \varphi$.
\end{proof}

We are now in a position to prove the continuity
of the operator~$\Ttildeo$.
\begin{proposition} \label{proposition:Ttildeo}
Let~$\varphi$ be any of the two options in~\eqref{options-varphi}.
Then, for all~$g$ in $H^1(\Rbb^n)$,
the operator~$\Ttildeo$ defined in~\eqref{def:Ttildeo}
satisfies the following continuity property:
\[
\Norm{\Ttildeo g}_{0,\Rbbn}
\le 2^{\frac{n-1}{2}}
    \left[ \Cphirhoz \Norm{\partialxl g}_{0,\Rbbn}
      + \Cphirhoo \Norm{g}_{0,\Rbbn} \right] .
\]
If~$g$ vanishes outside~$\Omega$, we also have
\[
\Norm{\Ttildeo g}_{0,\Omega}
\le 2^{\frac{n-1}{2}}
    \left[ \Cphirhoz \Norm{\partialxl g}_{0,\Omega}
      + \Cphirhoo \Norm{g}_{0,\Omega} \right] .
\]
\end{proposition}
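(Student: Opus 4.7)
The plan is to pass to the Fourier side via Lemma~\ref{lemma:1-Ttilde1}, bound each piece using Plancherel and a Cauchy--Schwarz argument in the integration variable~$t$, and then reduce the resulting double integrals to the estimates in Lemma~\ref{lemma:2-Ttildeo}. Since the right-hand side in the proposition depends on~$g$ only through the $H^1(\Rbbn)$ norm, a standard density argument reduces the analysis to~$g \in \Ccalzinf(\Rbbn)$, for which Lemma~\ref{lemma:1-Ttilde1} applies.

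By~\eqref{splitting:Ttildeoo+Ttildeot} and the isometry~\eqref{isometry:Fourier}, the norm~$\Norm{\Ttildeo g}_{0,\Rbbn}$ is controlled by the sum of the $L^2$ norms of $\widehat{\Ttildeoo g}$ and $\widehat{\Ttildeot g}$. For the first piece,
\[
\widehat{\Ttildeoo g}(\xi)
= \int_0^{\frac12} \bigl[ 2\pi \mathrm{i} \xi_j \widehat\varphi(t\xi) \bigr] \ \widehat{\partial_{x_\ell} g}((1-t)\xi)\, \dt,
\]
and Cauchy--Schwarz in~$t$ gives
\[
\SemiNorm{\widehat{\Ttildeoo g}(\xi)}^2
\le \left( \int_0^{\frac12} 2\pi \SemiNorm{\xi_j}\, \SemiNorm{\widehat\varphi(t\xi)}\, \dt \right)
    \left( \int_0^{\frac12} 2\pi \SemiNorm{\xi_j}\, \SemiNorm{\widehat\varphi(t\xi)}\, \SemiNorm{\widehat{\partial_{x_\ell} g}((1-t)\xi)}^2\, \dt \right),
\]
where the first factor is uniformly bounded by~$\Cphirhoz$ via~\eqref{2.3DuranA}.

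Integrating in~$\xi$ over~$\Rbbn$, swapping the order of integration, and performing the change of variables~$\eta = (1-t)\xi$ (with Jacobian~$(1-t)^{-n}$), followed by~$s = t/(1-t)$, the factor~$(1-t)^{-(n+1)}\dt$ transforms into~$(1+s)^{n-1}\,\mathrm{d}s$; the restriction~$t \in [0,\tfrac12]$ becomes~$s \in [0,1]$, so~$(1+s)^{n-1} \le 2^{n-1}$. Applying~\eqref{2.3DuranA} once more to the remaining integral in~$s$ against~$\SemiNorm{\widehat\varphi(s\eta)}$ produces another factor of~$\Cphirhoz$, and Plancherel concludes that
\[
\Norm{\Ttildeoo g}_{0,\Rbbn}
\le 2^{\frac{n-1}{2}}\, \Cphirhoz\, \Norm{\partial_{x_\ell} g}_{0,\Rbbn}.
\]
The same chain of manipulations applied to~$\widehat{\Ttildeot g}$, with the roles of~$\widehat\varphi$ and~$\widehat g$ interchanged and~\eqref{2.3DuranB} replacing~\eqref{2.3DuranA}, yields
\[
\Norm{\Ttildeot g}_{0,\Rbbn}
\le 2^{\frac{n-1}{2}}\, \Cphirhoo\, \Norm{g}_{0,\Rbbn}.
\]
Summing the two bounds via the triangle inequality gives the global estimate. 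The second assertion follows immediately: if~$g$ vanishes outside~$\Omega$, then~$\Norm{g}_{0,\Rbbn} = \Norm{g}_{0,\Omega}$ and similarly for~$\partial_{x_\ell} g$, while~$\Norm{\Ttildeo g}_{0,\Omega} \le \Norm{\Ttildeo g}_{0,\Rbbn}$.

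The main technical obstacle is the change-of-variables bookkeeping: obtaining the correct exponent~$(n-1)/2$ hinges on combining the Jacobian~$(1-t)^{-n}$ with the extra factor~$(1-t)^{-1}$ coming from $\xi_j = \eta_j/(1-t)$, and then choosing the substitution~$s = t/(1-t)$ so that the resulting integral in~$s$ is again of the form controlled by Lemma~\ref{lemma:2-Ttildeo}. The restriction~$t \le 1/2$ in the definition of~$\Ttildeo$ is crucial here, as it keeps~$(1+s)^{n-1}$ bounded and makes the integral in~$s$ run over a finite interval.
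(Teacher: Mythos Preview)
Your proof is correct and follows essentially the same approach as the paper: the splitting from Lemma~\ref{lemma:1-Ttilde1}, the weighted Cauchy--Schwarz in~$t$, the change of variables $\eta=(1-t)\xi$ followed by $s=t/(1-t)$, and the two applications of Lemma~\ref{lemma:2-Ttildeo} are exactly what the paper does (the paper simply cites \cite[Lemma~2.4]{Duran:2012} for the bound on~$\widehat{\Ttildeoo g}$ rather than writing it out, and proves the second assertion directly rather than deducing it from the first as you do).
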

\begin{proof}
We only prove the second assertion
and focus on functions~$g$ in~$\Ccal^\infty_0(\Omega)$;
the general statement follows then from a density argument.

We consider splitting~\eqref{splitting:Ttildeoo+Ttildeot}
and show separate bounds for the two terms on the right-hand side.
The first one can be handled as in \cite[Lemma 2.4]{Duran:2012}
and its proof is therefore omitted:
\begin{equation} \label{need-of-isometry-1}
\Norm{\widehat{\Ttildeoo g}}_{0,\Rbbn}
\le 2^{\frac{n-1}{2}} \Cphirhoz \Norm{\widehat{\partialxl g}}_{0,\Rbbn}.
\end{equation}
Thus, we focus on the second term.
By the definition of~$\widehat{\Ttildeot g}$,
the Cauchy-Schwarz inequality implies
\[
\SemiNorm{\widehat{\Ttildeot g} (\xi)}^2
\le \left( \int_0^{\frac12} 2\pi \SemiNorm{\xij}
        \SemiNorm{\widehat{\partialxl \varphi}(t \xi)} \dt \right) 
\left( \int_0^{\frac12} 2\pi \SemiNorm{\xij}
        \SemiNorm{\widehat{\partialxl \varphi}(t \xi)}
        \SemiNorm{\ghat((1-t)\xi)}^2 \dt \right). 
\]
Using~\eqref{2.3DuranB}, we deduce
\[
\SemiNorm{\widehat{\Ttildeot g} (\xi)}^2
\le \Cphirhoo
\int_0^{\frac12} 2\pi \SemiNorm{\xij}
        \SemiNorm{\widehat{\partialxl \varphi}(t \xi)}
        \SemiNorm{\ghat((1-t)\xi)}^2 \dt .
\]
Integrating over~$\xi$ and employing
the change of variable $\eta=(1-t)\xi$ give
\[
\int_{\Rbbn} \SemiNorm{\widehat{\Ttildeot g} (\xi)}^2 \dxi
\le \Cphirhoo
\int_0^{\frac12}  \int_{\Rbbn} 
\frac{2\pi}{(1-t)^{n+1}} \SemiNorm{\etaj}
        \SemiNorm{\widehat{\partialxl \varphi}
        \left( \frac{t \eta}{1-t} \right)}
        \SemiNorm{\ghat(\eta)}^2 \deta \dt 
\]
If we consider the change of variable $s=t/(1-t)$, which entails
\[
\dt = (1+s)^{-2} \ds,\qquad
\frac{1}{(1-t)^{n+1}} = 
\left( \frac st \right)^{n+1}
= (1+s)^{n+1},
\]
then we arrive at
\[
\int_{\Rbbn} \SemiNorm{\widehat{\Ttildeot g} (\xi)}^2 \dxi
\le 2^{n-1} \Cphirhoo 
 \int_{\Rbbn} 
 \left( \int_0^1 2\pi \SemiNorm{\etaj} 
         \SemiNorm{\widehat{\partialxl \varphi}(s\eta)} \ds \right)
 \SemiNorm{\ghat(\eta)}^2 \deta  .
\]
We apply again~\eqref{2.3DuranB}:
\begin{equation} \label{need-of-isometry-2}
\Norm{\widehat{\Ttildeot g}}_{0,\Rbbn}^2
= \int_{\Rbbn} \SemiNorm{\widehat{\Ttildeot g} (\xi)}^2 \dxi
\le 2^{n-1} \Cphirhoo^2
\int_{\Rbbn} \SemiNorm{\ghat(\eta)}^2 \deta
= 2^{n-1} \Cphirhoo^2 \Norm{\ghat}_{0,\Rbbn}^2.
\end{equation}
The assertion follows using the Fourier isometry~\eqref{isometry:Fourier}
in~\eqref{need-of-isometry-1}, identity~\eqref{need-of-isometry-2},
and the properties of~$\omega$ detailed in Section~\ref{subsection:preliminary-results}.
\end{proof}

\subsection{Continuity of~$\Ttildet$}
\label{subsection:continuity-Ttildet}
We discuss the continuity of the operator in~\eqref{def:Ttildet}.
As discussed in~\cite{Duran:2012},
a direct application of the H\"older and Cauchy-Schwarz inequalities
would end up with suboptimal estimates
as those in~\cite{Galdi:2011}.
Therefore, finer estimates are in order.
To this aim, we extend \cite[Section 3]{Duran:2012} to the first order case.



\begin{proposition} \label{proposition:Ttildet}
Let~$g$ and~$\Ttildet$ be as in~\eqref{options-varphi} and~\eqref{def:Ttildet}.
Assume that~$g$ belongs to~$L^2(\Rbbn)$ and has support contained in~$\Omega$.
Given $1\le p < n/(n-1)$
and~$p'$ the conjugate index of~$p$,
the following inequality holds true:
\[
\Norm{\Ttildet g}_{0,\Omega}
\le \frac{2^{\frac{n}{2}}}{(1-\frac{n}{p'})^{\frac{p}{2}}}
    \SemiNorm{\Omega}^{1-\frac p2}
  \Norm{\partialxjl \varphi}_{L^1(\Omega)}^{\frac{p}{2}}
  \Norm{\partialxjl \varphi}_{L^\infty(\Omega)}^{1-\frac{p}{2}}
  \Norm{g}_{0,\Omega}.
\]
\end{proposition}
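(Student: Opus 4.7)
The plan is to rewrite $\Ttildet g$ as a one-parameter integral of scaled convolutions, then apply in turn Minkowski's integral inequality, Young's convolution inequality with carefully chosen exponents, H\"older's inequality in the outer parameter, and standard $L^p$-interpolation.

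First I would substitute $s=1/t-1$ in~\eqref{def:Ttildet}, so that $s\in[0,1]$. Using the identity $y+(x-y)/t=x+s(x-y)$ and performing the further change of variables $u=x+s(x-y)$ in the inner integral recasts it as a scaled convolution on $\Rbbn$, giving
\[
\Ttildet g(x)
= \int_0^1 \Bigl(\frac{1+s}{s}\Bigr)^{\!n} (\partialxjl \varphi \ast \tilde g_s)\bigl((1+s)x\bigr) \ds,
\qquad \tilde g_s(v):=g(v/s).
\]
This brings the problem into a setting where Young's inequality on $\Rbbn$ applies.

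Next I would combine Minkowski's integral inequality (estimating $L^2(\Omega)$ by $L^2(\Rbbn)$), the scaling identity $\Norm{f((1+s)\cdot)}_{0,\Rbbn}=(1+s)^{-n/2}\Norm{f}_{0,\Rbbn}$, Young's inequality $\Norm{\partialxjl\varphi\ast \tilde g_s}_{0,\Rbbn}\le \Norm{\partialxjl\varphi}_{L^{r}(\Omega)}\Norm{\tilde g_s}_{L^{\sigma}(\Rbbn)}$ with the crucial choice $r=2/p$, $\sigma=2/(3-p)$ (so that $1/r+1/\sigma=3/2$), and the dilation identity $\Norm{\tilde g_s}_{L^{\sigma}}=s^{n/\sigma}\Norm{g}_{L^\sigma}$. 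Collecting powers of $(1+s)$ and $s$ gives
\[
\Norm{\Ttildet g}_{0,\Omega}\le \int_0^1 b(s)\ds, \qquad
b(s)\le (1+s)^{n/2}\,s^{n(1-p)/2}\,\Norm{\partialxjl\varphi}_{L^{2/p}(\Omega)}\Norm{g}_{L^{2/(3-p)}(\Omega)}.
\]

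Third, and this is the decisive step for matching the sharp form of the constant, I would apply H\"older's inequality on $[0,1]$ with conjugate exponents $(2/p,2/(2-p))$; since $|[0,1]|=1$ the second factor is trivial, so $\int_0^1 b(s)\ds\le(\int_0^1 b(s)^{2/p}\ds)^{p/2}$. Bounding $(1+s)^{n/p}\le 2^{n/p}$ and evaluating $\int_0^1 s^{n(1-p)/p}\ds=(1-n/p')^{-1}$---whose finiteness is exactly the hypothesis $p<n/(n-1)$---produces the prefactor $2^{n/2}(1-n/p')^{-p/2}$. To close, I would invoke the $L^p$ log-convexity inequality $\Norm{\partialxjl\varphi}_{L^{2/p}(\Omega)}\le \Norm{\partialxjl\varphi}_{L^1(\Omega)}^{p/2}\Norm{\partialxjl\varphi}_{L^\infty(\Omega)}^{1-p/2}$ and, using that $g$ is supported in $\Omega$ and $2/(3-p)\le 2$, H\"older's inequality $\Norm{g}_{L^{2/(3-p)}(\Omega)}\le \SemiNorm{\Omega}^{1-p/2}\Norm{g}_{0,\Omega}$.

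The main obstacle is the careful coordination of the Young exponents $(2/p,2/(3-p))$ with the H\"older exponent $2/p$ in the $s$ variable: a direct Minkowski/Young combination alone produces only the weaker factor $(1-n/p')^{-1}$ with mismatched norm exponents, while the sharp $p/2$ power on the denominator---and the matching exponents $p/2$ and $1-p/2$ on the kernel and geometric norms---emerge only through this self-improving H\"older step on the unit interval.
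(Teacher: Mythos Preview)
Your proof is correct and is precisely the argument the paper defers to: the paper's proof reads ``follows along the same lines of \cite[Lemma~3.2]{Duran:2012}, the only difference being the number of derivatives of~$\varphi$,'' and your substitution $s=1/t-1$, rewriting as a dilated convolution, the Young exponents $(2/p,2/(3-p))$, the H\"older step on $[0,1]$ with exponent $2/p$, and the final interpolation/H\"older pair are exactly Dur\'an's steps transported from $\partial_{x_j}\varphi$ to $\partial^2_{x_jx_\ell}\varphi$.
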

\begin{proof}
The proof follows along the same lines of
\cite[Lemma~3.2]{Duran:2012},
the only difference being the number of derivatives of~$\varphi$.
\end{proof}

\subsection{Continuity of~$\Ttilde$}
\label{subsection:continuity-Ttilde}
We prove the continuity of the operator in~\eqref{def:Ttilde}.

\begin{theorem} \label{theorem:continuity-Ttilde}
Let~$g$ and~$\Ttilde$ be as in~\eqref{options-varphi} and~\eqref{def:Ttilde}.
Assume that~$g$ belongs to~$H^1_0(\Omega)$.
Given $1\le p < n/(n-1)$
and~$p'$ the conjugate index of~$p$,
the following inequality holds true:
\begin{equation} \label{continuity-TtildeA}
\begin{split}
\Norm{\Ttilde g}_{0,\Omega}
& \le 2^{\frac{n-1}{2}}
     (\rho^{-1} \Norm{\varphi}_{L^1(\Omega)}
        + \rho \Norm{\partialxjj \varphi}_{L^1(\Omega)})
     \Norm{\partialxl g}_{0,\Omega} \\
& \quad + 2^{\frac{n-1}{2}}
    (\rho^{-1} \Norm{\partialxl \varphi}_{L^1(\Omega)}
        + \rho \Norm{\partialxjjl \varphi}_{L^1(\Omega)})
    \Norm{g}_{0,\Omega} \\
&\quad + \frac{2^{\frac{n}{2}}}{(1-\frac{n}{p'})^{\frac{p}{2}}}
  \SemiNorm{\Omega}^{1-\frac p2}
  \Norm{\partialxjl \varphi}_{L^1(\Omega)}^{\frac{p}{2}}
  \Norm{\partialxjl \varphi}_{L^\infty(\Omega)}^{1-\frac{p}{2}}
  \Norm{g}_{0,\Omega}.
\end{split}
\end{equation}
\end{theorem}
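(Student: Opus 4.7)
The proof is essentially the triangle inequality applied to the splitting $\Ttilde = \Ttildeo + \Ttildet$ introduced in~\eqref{def:Ttilde}, combined with the continuity estimates established in the previous two subsections. The plan is therefore short and largely bookkeeping; the conceptual work has already been done in Propositions~\ref{proposition:Ttildeo} and~\ref{proposition:Ttildet}.

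First, I would fix $g$ in $H^1_0(\Omega)$ and extend it by zero to $\Rbbn$; this extension belongs to $H^1(\Rbbn)$ and has support contained in $\overline\Omega$, which is precisely the hypothesis needed to apply both Proposition~\ref{proposition:Ttildeo} (its second, compactly supported variant) and Proposition~\ref{proposition:Ttildet}. Next, using~\eqref{def:Ttilde} and the triangle inequality in $L^2(\Omega)$, I would write
\[
\Norm{\Ttilde g}_{0,\Omega}
\le \Norm{\Ttildeo g}_{0,\Omega} + \Norm{\Ttildet g}_{0,\Omega}.
\]

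Then, applying Proposition~\ref{proposition:Ttildeo} to the first summand, and expanding the constants $\Cphirhoz$ and $\Cphirhoo$ according to their definitions in~\eqref{2.3DuranA} and~\eqref{2.3DuranB}, produces exactly the first two lines on the right-hand side of~\eqref{continuity-TtildeA}. Applying Proposition~\ref{proposition:Ttildet} to the second summand for any admissible $p \in [1, n/(n-1))$ produces the third line. Summing the two bounds yields~\eqref{continuity-TtildeA}.

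There is essentially no obstacle here: the only mild point to check is that the hypotheses of the two auxiliary propositions are compatible with $g \in H^1_0(\Omega)$, which is resolved by the zero-extension argument above, and that the choice of $\varphi$ in~\eqref{options-varphi} is admissible for both propositions simultaneously (it is, since $\varphi$ is smooth and compactly supported in $\Brho$). No interaction between the two pieces of the splitting needs to be tracked, as the estimates are purely additive.
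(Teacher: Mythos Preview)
Your proposal is correct and is exactly the paper's own argument: combine the estimates of Propositions~\ref{proposition:Ttildeo} and~\ref{proposition:Ttildet} via the splitting~\eqref{def:Ttilde} and substitute the explicit expressions of~$\Cphirhoz$ and~$\Cphirhoo$ from~\eqref{2.3Duran}. There is nothing to add.
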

\begin{proof}
The assertion follows combining
Lemmas~\ref{proposition:Ttildeo} and~\ref{proposition:Ttildet},
and the explicit representation of the constants
$\Cphirhoz$ and~$\Cphirhoo$
in~\eqref{2.3Duran}.
\end{proof} 

Next, we derive explicit constants with respect to~$\hOmega$ and~$\rho$
for inequality~\eqref{continuity-TtildeA},
i.e., we are in a position for proving one of the main results of the manuscript.

\begin{proof}[Proof of Theorem~\ref{theorem:1st-Babu-Aziz}.]
For all $j$, $\ell=1,\dots,n$,
we have to bound the two terms on the right-hand side
of splitting~\eqref{splitting:Ttilde-2nd-derivative}.
We only prove bounds for~$\Ttildekjlt (\yk f(y))$,
as the bounds for~$\Ttildekjlo (f(y))$ are analogous.
For the term~$\Ttildekjlo (\yk f(y))$, we have that
$\varphi(x)$ in~\eqref{options-varphi}
is given by $\omega(x)$,
which is supported, with integral~$1$,
in the ball~$\Brho$ of radius~$\rho$.
Without loss of generality, the ball can be centred at the origin.

We can write
\[
\varphi(x)
= \rho^{-n}  \psi(\rho^{-1} x),
\]
where~$\psi$ has the same smoothness as~$\varphi$,
is supported in the unitary ball~$\Boz$
with integral~$1$,
and is fixed once and for all.

The following properties of~$\varphi$ and its derivatives are valid:
\[
\begin{alignedat}{3}
& \quad
&& \Norm{\varphi}_{L^1(\Rbbn)} \approx 1,\quad
&& \Norm{\varphi}_{L^\infty(\Rbbn)} \approx \rho^{-n};\\
& \partialxj \varphi(x) = \rho^{-n-1}\partialxj \varphi(\rho^{-1}x),\quad
\qquad
&& \Norm{\partialxj\varphi}_{L^1(\Rbbn)} \approx \rho^{-1},\quad
&& \Norm{\partialxj\varphi}_{L^\infty(\Rbbn)} \approx \rho^{-n-1};\\
& \partialxjl \varphi(x) = \rho^{-n-2}\partialxjl \varphi(\rho^{-1}x),\quad
&& \Norm{\partialxjl\varphi}_{L^1(\Rbbn)} \approx \rho^{-2},\quad
&& \Norm{\partialxjl\varphi}_{L^\infty(\Rbbn)} \approx \rho^{-n-2};\\
& \partialxjjl \varphi(x) = \rho^{-n-3}\partialxjjl \varphi(\rho^{-1}x),\quad
&& \Norm{\partialxjjl\varphi}_{L^1(\Rbbn)} \approx \rho^{-3},\quad
&& \Norm{\partialxjjl\varphi}_{L^\infty(\Rbbn)} \approx \rho^{-n-3}.
\end{alignedat}
\]
The definition of $\Ttildekjlt(\yk f)$,
the chain rule,
the inequality~$\vert \yk \vert \le R$,
and the fact that $\rho^{-1} \le R\rho^{-2}$ imply
\small{\begin{equation} \label{eq-green}
\begin{split}
\SemiNorm{\ubf}_{2,\Omega}
& \lesssim_n
  \hOmega \left[\rho^{-1} +\rho \ \rho^{-2} \right] \SemiNorm{f}_{1,\Omega}
  + \left[\rho^{-1} + \rho \rho^{-2}\right] \Norm{f}_{0,\Omega}\\
& \qquad\qquad\qquad
    + \hOmega [\rho^{-1}\rho^{-1} + \rho \rho^{-3}] \Norm{f}_{0,\Omega}
    + \hOmega \frac{2^{\frac n2}}
                    {\left(1-\frac{n}{p'}\right)^{\frac p2}}
  \SemiNorm{\Omega}^{1-\frac p2} (\rho^{-2})^{\frac p2}
  (\rho^{-n-2})^{1-\frac p2} \Norm{f}_{0,\Omega}\\
& \lesssim  \hOmega \left[ \rho^{-1} \SemiNorm{f}_{1,\Omega}
        + \rho^{-2} \Norm{f}_{0,\Omega}
        + \frac{2^{\frac n2}}
         {\left(1-\frac{n}{p'}\right)^{\frac p2}}
        \SemiNorm{\Omega}^{1-\frac p2}
        \rho^{-2-n\left( 1-\frac p2 \right)}
        \Norm{f}_{0,\Omega} \right]\\
& \lesssim_n
    \frac{\hOmega}{\rho} \SemiNorm{f}_{1,\Omega}
    + \frac{\hOmega}{\rho^2} \Norm{f}_{0,\Omega}
        \left[ 1 + \left(1-\frac{n}{p'}\right)^{-\frac p2}
        \SemiNorm{\Omega}^{1-\frac p2}
        \rho^{-n\left( 1-\frac p2 \right)}
            \right] .
\end{split}
\end{equation}}\normalsize
We cope with part of the last coefficient on the right-hand side separately.
Using that
\[
\SemiNorm{\Brho}^{1-\frac p2} \rho^{-n\left( 1-\frac p2 \right)} \approx 1,
\]
we can write
\begin{equation} \label{eq-heart}
\begin{split}
& \left(1-\frac{n}{p'}\right)^{-\frac p2}
   \SemiNorm{\Omega}^{1-\frac p2}
   \rho^{-n\left( 1-\frac p2 \right)}
= \left(1-\frac{n}{p'}\right)^{-\frac p2}
   \left(\frac{\SemiNorm{\Omega}}{\SemiNorm{\Brho}}\right)^{1-\frac p2}
   \SemiNorm{\Brho}^{1-\frac p2}
   \rho^{-n\left( 1-\frac p2 \right)}\\
& \approx \left(1-\frac{n}{p'}\right)^{-\frac p2}
   \left(\frac{\SemiNorm{\Omega}}{\SemiNorm{\Brho}}\right)^{\frac{n-2}{2(n-1)}}
   \left(\frac{\SemiNorm{\Omega}}{\SemiNorm{\Brho}}\right)^{\frac12 \left( \frac{n}{n-1} - p \right) }.
\end{split}
\end{equation}
For $\SemiNorm{\Omega}/\SemiNorm{\Brho}$ sufficiently large
(the ball~$\Brho$ is anyhow fixed once and for all in the reference framework),
we choose~$p$ such that
\[
\frac12 \left( \frac{n}{n-1} - p \right) = \frac{1}{\log\left( \frac{\SemiNorm{\Omega}}{\SemiNorm{\Brho}} \right)}.
\]
Equivalently, we pick~$p$ such that
\[
\log\left( \frac{\SemiNorm{\Omega}}{\SemiNorm{\Brho}} \right)
= \frac{1}{\frac12 \left( \frac{n}{n-1} - p \right)}
\quad\Longrightarrow\quad
\frac{\SemiNorm{\Omega}}{\SemiNorm{\Brho}}
= e^{\frac{1}{\frac12 \left( \frac{n}{n-1} - p \right)}}
\quad\Longrightarrow\quad
\left( \frac{\SemiNorm{\Omega}}{\SemiNorm{\Brho}} \right)
^{\frac12 \left( \frac{n}{n-1} - p \right)} = e^1.
\]
We plug the last identity in~\eqref{eq-heart} and deduce
\[
\left(1-\frac{n}{p'}\right)^{-\frac p2}
   \SemiNorm{\Omega}^{1-\frac p2}
   \rho^{-n\left( 1-\frac p2 \right)}
\approx 
 \left(1-\frac{n}{p'}\right)^{-\frac p2}
   \left(\frac{\SemiNorm{\Omega}}{\SemiNorm{\Brho}}\right)^{\frac{n-2}{2(n-1)}} e^1.
\]
Going back to~\eqref{eq-green}, we write
\[
\SemiNorm{\ubf}_{2,\Omega}
 \lesssim_n
 \frac{\hOmega}{\rho} \SemiNorm{f}_{1,\Omega}
    + \frac{\hOmega}{\rho^2} \Norm{f}_{0,\Omega}
        \left[ 1 + \left(1-\frac{n}{p'}\right)^{-\frac p2}
   \left(\frac{\SemiNorm{\Omega}}{\SemiNorm{\Brho}}\right)^{\frac{n-2}{2(n-1)}}
            \right] .
\]
We further note that
\[
1-\frac{n}{p'}
= \left( \frac{n-1}{p} \right) \left( \frac{n}{n-1} - p \right)
= \frac{2(n-1)}{p \log\left( \frac{\SemiNorm{\Omega}}{\SemiNorm{\Brho}} \right)}.
\]
We combine the two above displays:
\[
\SemiNorm{\ubf}_{2,\Omega}
 \lesssim_n
 \frac{\hOmega}{\rho} \SemiNorm{f}_{1,\Omega}
    + \frac{\hOmega}{\rho^2} \Norm{f}_{0,\Omega}
        \left[ 1 +  \left(\frac{\SemiNorm{\Omega}}{\SemiNorm{\Brho}}\right)^{\frac{n-2}{2(n-1)}}
            \frac{p^{\frac p2}}{2(n-1)^{\frac p2}} 
            \log\left( \frac{\SemiNorm{\Omega}}{\SemiNorm{\Brho}} \right)^{\frac p2} \right] .
\]
Using $p < n/(n-1)$, the following quantity is uniformly bounded in~$n$ and thus in~$p$:
\[
\frac{p^{\frac p2}}{2(n-1)^{\frac p2}} .
\]
Moreover, we know that $p/2 < n/(2(n-1))$.
We deduce that
\[
\SemiNorm{\ubf}_{2,\Omega}
 \lesssim_n
 \frac{\hOmega}{\rho} \SemiNorm{f}_{1,\Omega}
    + \frac{\hOmega}{\rho^2} \Norm{f}_{0,\Omega}
        \left[ 1 +  \left(\frac{\SemiNorm{\Omega}}{\SemiNorm{\Brho}}\right)^{\frac{n-2}{2(n-1)}}
            \log\left( \frac{\SemiNorm{\Omega}}{\SemiNorm{\Brho}} \right)^{\frac{n}{2(n-1)}} \right] ,
\]
which is the assertion.

\end{proof}

Compared to the lowest order estimate~\eqref{explicit-estimates-Duran},
we have an extra term involving the gradient of~$f$
and an extra $\rho^{-1}$ scaling factor for the term involving~$f$.

\begin{remark} \label{remark:union-star-shaped}
The issue on whether estimates as in Theorem~\ref{theorem:1st-Babu-Aziz}
can be extended to union of star-shaped domains
was addressed in~\cite{Guzman-Salgado:2021}.
Their proof relies on partition of unity techniques;
this entails that estimates have constants that are not fully explicit
with respect to the shape of the domain~\cite{Guzman:2024}.
A simpler open problem is whether
one may be able to prove
arbitrary order Babu\v ska-Aziz inequalities
with explicit constants
on the union of simpler star-shaped domains,
e.g., on simplicial patches.
\end{remark}

\begin{remark} \label{remark:boundary-conditions}
We have that~$\ubf$ in~\eqref{eq:bogovskii}
satisfies the boundary conditions,
i.e., that~$\ubf$ belongs to~$[H_0^2(\Omega)]^n$;
this is shown for instance in \cite[Lemma III.3.1]{Galdi:2011} for smooth functions;
the corresponding result for Sobolev spaces is achieved via density arguments.
As it is of independent interest,
we discuss an alternative proof
of this fact in Appendix~\ref{appendix:alternative-zero-trace},
which holds also in the non-Hilbertian setting.
\end{remark}

\subsection{On the optimality of the estimates in Theorem~\ref{theorem:1st-Babu-Aziz}
in 2D}
\label{subsection:counterexample}

We comment on the optimality of the estimates
in Theorem~\ref{theorem:1st-Babu-Aziz}
for planar domains
based on a counterexample in \cite[Section~3]{Duran:2012}.
Introduce the domain
\[
\Omegaaeps:= (-a,a) \times (-\varepsilon, \varepsilon)
\]
and the function
\[
f(x_1,x_2)=x_1.
\]
Let~$\ubf$ be the solution to the divergence problem~\eqref{1st-Babu-Aziz}. We have
\[
\Norm{x_1}_{0,\Omegaaeps}^2
 = \int_{\Omegaaeps} x_1 \div\ubf
  = -\int_{\Omegaaeps} \ubf_1
  = -\frac12 \int_{\Omegaaeps} x_2^2 \partial_{x_2^2}^2 \ubf_1
\le \frac12 \Norm{x_2^2}_{0,\Omegaaeps}
    \Norm{\partial_{x_2^2}^2 \ubf_1}_{0,\Omegaaeps}.
\]
We use estimates as in~\eqref{1st-Babu-Aziz} for the last term on the right-hand side:
there exists positive constants~$C_1$ and~$C_2$ depending
on~$\hOmega=2a$ and~$\rho=\varepsilon$ such that
\[
\Norm{x_1}_{0,\Omegaaeps}^2
\le \frac12 \Norm{x_2^2}_{0,\Omegaaeps}
    \left[  \CBAoA \Norm{x_1}_{0,\Omegaaeps}
      + \CBAoB \Norm{1}_{0,\Omegaaeps} \right] .
\]
We have
\[
 \Norm{x_1}_{0,\Omegaaeps}^2   
    = \frac43 a^3 \varepsilon ,
    \qquad\qquad
 \Norm{x_2^2}_{0,\Omegaaeps}^2
    = \frac45 a \varepsilon^5 ,
    \qquad\qquad
 \Norm{1}_{0,\Omegaaeps}^2     
 = 4 a \varepsilon.
\]
Combining the two displays, we get
\[
a^3 \varepsilon
\lesssim  \CBAoA a^{2} \varepsilon^{3}
        + \CBAoB a \varepsilon^{3},
\]
whence
\[
1
\lesssim \CBAoA a^{-1} \varepsilon^2 + \CBAoB a^{-2} \varepsilon^2 
\approx \CBAoA \frac{\rho^2}{\hOmega} + \CBAoB \frac{\rho^2}{\hOmega^2} .
\]
This inequality implies that
at least one of the following must hold true:
\[
\CBAoA \gtrsim \frac{\hOmega}{\rho^2},
\qquad\qquad\qquad
\CBAoB \gtrsim \frac{\hOmega^2}{\rho^2}.
\]
Using~\eqref{estimates:Bogo-constants},
we deduce
\[
\frac{\hOmega^2}{\rho^2}
\lesssim \CBAoB 
\lesssim \frac{\hOmega}{\rho},
\]
which cannot be valid in general with hidden constants
independent of~$R$ and~$\rho$ as in~\eqref{Rrho}.
This entails that
\[
\CBAoA \gtrsim \frac{\hOmega}{\rho^2},
\]
i.e., the first bound in~\eqref{estimates:Bogo-constants}
is optimal up to a logarithmic factor.

We are not able to infer a clear statement on the optimality of~$\CBAoB$ from the bound above.
A heuristic argument based on scaling  techniques, suggests
however that bound~\eqref{estimates:Bogo-constants} on~$\CBAoB$ should be also optimal.

\section{Proof of a Ne\v cas-Lions inequality with symmetric gradients} \label{section:NL-sg}

We prove Theorem~\ref{theorem:new-Necas-Lions-symmetric-gradients}.
To this aim, we first show an auxiliary result.
\begin{lemma} \label{lemma:gradcurl-2curlgradsym}
Let~$\ubf$ be a sufficiently smooth vector field.
Then, the following identities hold true:
\begin{equation} \label{gradcurl-2curlgradsym}
[\nabla(\nabla\times\ubf)]^T
= \nabla\times(\nabla \ubf + (\nabla \ubf)^T)
= 2 \nabla\times(\nablaS \ubf),
\end{equation}
where, for a given tensor~$\mathbf A$,
$\nabla\times \mathbf A$
denotes the matrix obtained by
applying the curl operator row-wise to~$\mathbf A$.
\end{lemma}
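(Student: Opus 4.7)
The plan is to verify the two identities in \eqref{gradcurl-2curlgradsym} component-wise, exploiting the well-known fact that $\nabla\times\nabla g =0$ for any sufficiently smooth scalar field $g$.

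\medskip
\noindent\textbf{Second equality.} This one is essentially by definition: since $\nablaS\ubf=\tfrac12(\nabla\ubf+(\nabla\ubf)^T)$ and the row-wise curl is linear, one has
\[
\nabla\times(\nabla\ubf+(\nabla\ubf)^T)=2\,\nabla\times(\nablaS\ubf).
\]
No computation is needed here; I would just state it and move on.

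\medskip
\noindent\textbf{First equality, reduction step.} The row-by-row structure of $\nabla\ubf$ is
\[
\text{row }i\text{ of }\nabla\ubf \;=\; \nabla u_i ,
\]
so applying the curl row-wise yields $\nabla\times\nabla u_i=\mathbf 0$ for each $i$. Consequently
\[
\nabla\times(\nabla\ubf)=\mathbf 0,\qquad\text{hence}\qquad
\nabla\times(\nabla\ubf+(\nabla\ubf)^T)=\nabla\times\bigl((\nabla\ubf)^T\bigr).
\]
This cuts the problem down to verifying $[\nabla(\nabla\times\ubf)]^T=\nabla\times\bigl((\nabla\ubf)^T\bigr)$.

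\medskip
\noindent\textbf{First equality, componentwise verification.} Now row $i$ of $(\nabla\ubf)^T$ is the $i$-th column of $\nabla\ubf$, which has entries $\partial_i u_j$ for $j=1,\dots,n$. In three dimensions, the first component of the curl of that row equals
\[
\partial_2(\partial_i u_3)-\partial_3(\partial_i u_2)
=\partial_i\bigl(\partial_2 u_3-\partial_3 u_2\bigr)
=\partial_i(\nabla\times\ubf)_1,
\]
and analogous formulas hold for the other two components by cyclic permutation. Thus the $(i,j)$ entry of $\nabla\times\bigl((\nabla\ubf)^T\bigr)$ is $\partial_i(\nabla\times\ubf)_j$, which is exactly the $(i,j)$ entry of $[\nabla(\nabla\times\ubf)]^T$. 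In two dimensions the argument is the same but simpler, since $\nabla\times\ubf=\partial_1 u_2-\partial_2 u_1$ is a scalar and the row-wise curl reduces to a column vector with entries $\partial_1(\partial_i u_2)-\partial_2(\partial_i u_1)=\partial_i(\nabla\times\ubf)$.

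\medskip
\noindent\textbf{Expected obstacle.} There is no real obstacle: the only point that deserves care is the bookkeeping of conventions (which index of $\nabla\ubf$ is the row, how $\nabla\times$ acts on a matrix, and how the transpose on the left-hand side interacts with the curl on the right). Once the convention is fixed, the proof is a direct application of Schwarz's theorem on the equality of mixed partials, which is what allows the factor $\partial_i$ to be pulled out in the displayed computation above.
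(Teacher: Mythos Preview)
Your proof is correct and follows essentially the same approach as the paper, namely a direct componentwise verification together with the observation that the second identity is immediate from the definition of~$\nablaS$. The only cosmetic difference is that you insert the reduction step $\nabla\times(\nabla\ubf)=\mathbf 0$ before computing, whereas the paper writes out the full matrix of $[\nabla(\nabla\times\ubf)]^T$ and then asserts that the same explicit computation reproduces $\nabla\times(\nabla\ubf+(\nabla\ubf)^T)$.
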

\begin{proof}
Direct computations give
\[
[\nabla(\nabla\times\ubf)]^T =
\begin{bmatrix}
\partial_{x_1}(\partial_{x_2}\ubf_3 - \partial_{x_3}\ubf_2)
& \partial_{x_1}(\partial_{x_3}\ubf_1 - \partial_{x_1}\ubf_3)
& \partial_{x_1}(\partial_{x_1}\ubf_2 - \partial_{x_2}\ubf_1) \\
\partial_{x_2}(\partial_{x_2}\ubf_3 - \partial_{x_3}\ubf_2)
& \partial_{x_2}(\partial_{x_3}\ubf_1 - \partial_{x_1}\ubf_3)
& \partial_{x_2}(\partial_{x_1}\ubf_2 - \partial_{x_2}\ubf_1) \\
\partial_{x_3}(\partial_{x_2}\ubf_3 - \partial_{x_3}\ubf_2)
& \partial_{x_3}(\partial_{x_3}\ubf_1 - \partial_{x_1}\ubf_3)
& \partial_{x_3}(\partial_{x_1}\ubf_2 - \partial_{x_2}\ubf_1)
\end{bmatrix}.
\]
Similarly, we may show that the above tensor coincides with
$\nabla\times(\nabla \ubf + (\nabla \ubf)^T)$,
which gives the first identity in~\eqref{gradcurl-2curlgradsym};
the second identity in~\eqref{gradcurl-2curlgradsym}
is a consequence of the definition of the symmetric gradient.
\end{proof}
We are now in a position to prove Theorem~\ref{theorem:new-Necas-Lions-symmetric-gradients}.
For any generic~$\qbfRM$ in the space
of rigid body motions~$\RM(\Omega)\cap [L^2_0(\Omega)]^n$
as discussed in Section~\ref{subsection:main-result3},
we have
\[
\Norm{\vbf-\PiRM \vbf}_{0,\Omega}
\le \Norm{\vbf-\qbfRM - \Piboldzz(\vbf-\qbfRM)}_{0,\Omega}
\qquad\qquad
\forall \vbf \in [L^2(\Omega)]^n.
\]
An immediate consequence of the standard, vector version,
lowest order Ne\v cas-Lions
inequality~\eqref{0th-Necas-Lions} is that
\begin{equation} \label{addendum-1}
\Norm{\vbf-\PiRM \vbf}_{0,\Omega}
\le \CNLz \Norm{\nabla(\vbf-\qbfRM)}_{-1,\Omega}
\qquad\qquad \forall \qbfRM \in \RM(\Omega) \cap [L^2_0(\Omega)]^n.
\end{equation}
We are left to prove the existence of a positive constant~$C$
with explicit dependence on~$R$ and~$\rho$ as in~\eqref{Rrho},
such that,
for a specific choice of~$\qbfRM$,
the following inequality is valid:
\[
\Norm{\nabla(\vbf-\qbfRM)}_{-1,\Omega}
\le C \Norm{\nablaS \vbf}_{-1}.
\]
Using splitting~\eqref{splitting-gradient}
of the gradient into symmetric and skew-symmetric parts,
and the triangle inequality entails
\begin{equation} \label{addendum-2}
\Norm{\nabla(\vbf-\qbfRM)}_{-1,\Omega}
\le \Norm{\nablaS(\vbf-\qbfRM)}_{-1,\Omega}
     + \Norm{\nablaSS(\vbf-\qbfRM)}_{-1,\Omega}.
\end{equation}
Since~$\qbfRM$ is a rigid body motion,
$\nablaS \qbfRM$  is the zero tensor in~$\Rbb^{n\times n}$:
the first term on the right-hand side is equal to~$\Norm{\nablaS \vbf}_{-1,\Omega}$.
As for the second term on the right-hand side,
we define~$A^{n\times n}$ as the space of $(n\times n)$ skew-symmetric matrices,
$n=2,3$.
We take~$\qbfRM$ such that
\[
\Norm{\nablaSS(\vbf-\qbfRM)}_{-1,\Omega}
: = \inf_{\widetilde\qbfRM \in \RM(\Omega)\cap[L^2_0(\Omega)]^{n\times n}} \Norm{\nablaSS(\vbf-\widetilde\qbfRM)}_{-1,\Omega}
= \inf_{\cbf \in A^{n\times n}} \Norm{\nablaSS \vbf- \cbf}_{-1,\Omega} .
\]
With this at hand, elementary computations give
\begin{equation}
\label{skewviacurl}
\Norm{\nablaSS(\vbf-\qbfRM)}_{-1,\Omega}
= \inf_{\cbf \in A^{n\times n}} \Norm{\nablaSS\vbf-\cbf}_{-1,\Omega}
\le \frac1{\sqrt2} \inf_{\cbf \in \Rbb^{2n-3}} \Norm{\nabla\times\vbf - \cbf}_{-1,\Omega}.
\end{equation}
Note that the last two norms involve tensors and vectors, respectively.

An immediate consequence of
the scalar and vector versions of~\eqref{1st-Necas-Lions},
\eqref{Poincare},
and~\eqref{CBo-le-CNLo} is that
\[
\inf_{\cbf \in \Rbb^{2n-3}} \Norm{\nabla\times\vbf - \cbf}_{-1,\Omega}
\le \CNLo \Norm{\nabla(\nabla\times\vbf)}_{-2,\Omega}
\le (\CBAoA\CP R+\CBAoB) \Norm{\nabla(\nabla\times\vbf)}_{-2,\Omega}.
\]
Lemma~\ref{lemma:gradcurl-2curlgradsym} reveals that
\begin{equation} \label{dependence-CNLzstar}
\begin{split}
\inf_{\cbf \in \Rbb^{2n-3}} \Norm{\nabla\times\vbf - \cbf}_{-1,\Omega}
& \le 2 (\CBAoA\CP R+\CBAoB) 
    \Norm{\boldsymbol{\nabla} \boldsymbol{\times} \nablaS \vbf}_{-2,\Omega} \\
& \le 2 (\CBAoA\CP R+\CBAoB) \Norm{\nablaS \vbf}_{-1,\Omega}.
\end{split}
\end{equation}
Combining~\eqref{addendum-1}, \eqref{addendum-2},
\eqref{skewviacurl}, and~\eqref{dependence-CNLzstar}
yields the assertion.

\paragraph*{Acknowledgements.}
The Authors have been partially funded by the European Union
(ERC, NEMESIS, project number 101115663);
views and opinions expressed are however those of the author(s) only
and do not necessarily reflect those of the EU or the ERC Executive Agency.
LM has been partially funded by MUR (PRIN2022 research grant n. 202292JW3F).
The Authors are also members of the Gruppo Nazionale Calcolo Scientifico-Istituto
Nazionale di Alta Matematica (GNCS-INdAM).
The authors are grateful to an anonymous referee
for the helpful comments that lead
to an improved version of the manuscript.

{\footnotesize
\bibliography{bibliography.bib}}
\bibliographystyle{plain}

\appendix
\section{Arbitrary order Babu\v ska-Aziz inequalities
on star-shaped domains with explicit constants}
\label{appendix:general-order-BA}

In this appendix, we provide a road-map
for proving arbitrary order Babu\v ska-Aziz inequalities
that are explicit with respect
to~$R$ and~$\rho$ in~\eqref{Rrho},
based on a generalisation of the analysis in Section~\ref{section:2nd-Bogovskii}.
In particular, we prove the following generalisation of Theorem~\ref{theorem:1st-Babu-Aziz}.

\begin{theorem} \label{theorem:general-order-Babu-Aziz}
Let~$d$ in~$\Nbb$ be larger than or equal to~$1$,
and~$R$ and~$\rho$ be as in~\eqref{Rrho}.
Assume that~$f$ is in~$H^{d-1}_0(\Omega)\cap L^2_0(\Omega)$.
Then, there exists~$\ubf$ in $[H^d_0(\Omega)]^n$ such that
$\div \ubf=f$ and
\begin{equation} \label{explicit-general-order-estimates}
\begin{split}
\SemiNorm{\ubf}_{d,\Omega}
& \lesssim_n \sum_{\ell=1}^{d-1}
            \frac{\hOmega}{\rho^{d-\ell}}
            \SemiNorm{f}_{\ell,\Omega} 
        + \frac{\hOmega}{\rho^d} 
            \left[ 1 + \left( \frac{\SemiNorm{\Omega}}{\SemiNorm{\Brho}} \right)^{\frac{n-2}{2(n-1)}}
            \left( \log\frac{\SemiNorm{\Omega}}{\SemiNorm{\Brho}} \right)^{\frac{n}{2(n-1)}}
            \right] \Norm{f}_{0,\Omega}.
\end{split}
\end{equation}
\end{theorem}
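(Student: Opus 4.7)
The plan is to generalize the argument of Section~\ref{section:2nd-Bogovskii} step by step, replacing second order derivatives with $d$-th order derivatives throughout. I would first take Bogovski\u\i's explicit representation $\ubf(x) = \int_\Omega \Gbf(x,y) f(y) \dy$ from~\eqref{eq:bogovskii} and compute the distributional derivative $\partial^d_{x_{j_1} \cdots x_{j_d}} \ubfk$, as done in the second order case in~\eqref{splitting:Ttilde-2nd-derivative}. After the same $\ubfk = \ubfko - \ubfkt$ splitting, the derivative writes as a difference of two integral operators $\Ttilde_{d,J,1}(f) - \Ttilde_{d,J,2}(\yk f)$, where $J=(j_1, \ldots, j_d)$ and each operator has the general form
\begin{equation*}
\Ttilde_{d,J,s}(g)(x) = \lim_{\varepsilon \to 0} \int_\varepsilon^1 \int_{\Rbbn} \partial^d_{x_{j_1} \cdots x_{j_d}} \left[ \varphi\left(y + \frac{x - y}{t}\right) \right] g(y) \dy \frac{\dt}{t^n},
\end{equation*}
with $\varphi$ and $g$ chosen exactly as in~\eqref{options-varphi}. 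As in~\eqref{def:Ttildeo}--\eqref{def:Ttildet}, I would split the $t$-integration and write $\Ttilde_{d,J,s} = \Ttilde_{d,\alpha} + \Ttilde_{d,\beta}$, corresponding to $t \in (0, 1/2)$ and $t \in (1/2, 1)$ respectively.

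For $\Ttilde_{d,\beta}$ the proof of Proposition~\ref{proposition:Ttildet} carries over verbatim with $\partial^d_{x_{j_1} \cdots x_{j_d}} \varphi$ in place of $\partial^2_{x_j x_\ell} \varphi$; this yields an $L^2 \to L^2$ bound involving $\Norm{\partial^d \varphi}_{L^1(\Omega)}^{p/2} \Norm{\partial^d \varphi}_{L^\infty(\Omega)}^{1 - p/2}$ times the geometric factor $\SemiNorm{\Omega}^{1 - p/2}(1 - n/p')^{-p/2}$. Under the scaling $\varphi = \rho^{-n} \psi(\rho^{-1} \cdot)$, one has $\Norm{\partial^d \varphi}_{L^1} \approx \rho^{-d}$ and $\Norm{\partial^d \varphi}_{L^\infty} \approx \rho^{-n-d}$, and choosing $p$ exactly as in the final step of the proof of Theorem~\ref{theorem:1st-Babu-Aziz} via $\log(\SemiNorm{\Omega}/\SemiNorm{\Brho}) = 2/(n/(n-1) - p)$ contributes the full logarithmic factor multiplying $\rho^{-d}$ in~\eqref{explicit-general-order-estimates}.

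For $\Ttilde_{d,\alpha}$ I would generalize Lemma~\ref{lemma:1-Ttilde1}: after the change of variables $z = y + (x - y)/t$ and $d$-fold integration by parts, the Fourier transform reads
\begin{equation*}
\widehat{\Ttilde_{d,\alpha} g}(\xi) = (2\pi{\rm i})^d \xi_{j_1} \cdots \xi_{j_d} \int_0^{1/2} \widehat\varphi(t\xi) \widehat g((1-t)\xi) \dt.
\end{equation*}
I would then keep one factor $\xi_{j_1}$ outside and split each remaining $2\pi{\rm i}\,\xi_{j_s} = 2\pi{\rm i}(1-t)\xi_{j_s} + 2\pi{\rm i}\,t\,\xi_{j_s}$ for $s=2,\ldots,d$, producing $2^{d-1}$ terms of the form
\begin{equation*}
(2\pi{\rm i}\,\xi_{j_1}) \int_0^{1/2} \widehat{\partial^{\alpha_\varphi}\varphi}(t\xi) \widehat{\partial^{\alpha_g} g}((1-t)\xi) \dt,
\end{equation*}
each indexed by a subset $S \subseteq \{2,\ldots,d\}$ encoding which $|S|$ derivatives land on $g$ (producing $\alpha_g$) while the other $d-1-|S|$ land on $\varphi$ (producing $\alpha_\varphi$). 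Applying Lemma~\ref{lemma:2-Ttildeo} to $\partial^{\alpha_\varphi}\varphi$ in place of $\varphi$, combined with the Cauchy--Schwarz and Plancherel argument of Proposition~\ref{proposition:Ttildeo}, yields a bound of order $\rho^{-(d - |S|)} \Norm{\partial^{\alpha_g} g}_{0,\Omega}$ after scaling. Summing over subsets gives $\Norm{\Ttilde_{d,\alpha} g}_{0,\Omega} \lesssim \sum_{\ell=0}^{d-1} \rho^{-(d-\ell)} \SemiNorm{g}_{\ell,\Omega}$.

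Combining the $\alpha$ and $\beta$ contributions, summing over multi-indices $J$ and components $k$, and handling the case $g = \yk f$ via Leibniz (which introduces the prefactor $\hOmega$ on the top-derivative term and lower order terms that are reabsorbed into the sum) assembles into~\eqref{explicit-general-order-estimates}. The main obstacle will be the combinatorial bookkeeping needed to track which of the $d$ derivatives lands on $g$ versus on $\varphi$ across all $2^{d-1}$ terms of the Fourier splitting, and to verify that the resulting scalings $\rho^{-(d-\ell)}$ assemble precisely into the sum displayed in~\eqref{explicit-general-order-estimates}; a related subtlety is that the hypothesis $f \in H^{d-1}_0(\Omega)$ (rather than $H^d_0(\Omega)$) forces at most $d-1$ derivatives onto $g$, which is precisely why exactly one factor $\xi_{j_1}$ must be retained unsplit so that the $L^1$-bound from Lemma~\ref{lemma:2-Ttildeo} absorbs it throughout the iteration.
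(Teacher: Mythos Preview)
Your proposal is correct and follows essentially the same route as the paper's own argument in Appendix~\ref{appendix:general-order-BA}: the same $t\in(0,\tfrac12)$ versus $t\in(\tfrac12,1)$ decomposition, the same Fourier identity obtained by keeping one factor $2\pi{\rm i}\,\xi_{j_1}$ unsplit while binomially distributing the remaining $d-1$ frequency factors between~$\varphi$ and~$g$ (the paper's Lemma~\ref{lemma:1-Ttilde1-general}), the same application of Lemma~\ref{lemma:2-Ttildeo} to derivatives of~$\varphi$ (the paper's Lemma~\ref{lemma:2-Ttildeo-general}), and the same final scaling and logarithmic choice of~$p$. Your remark that retaining one unsplit factor is forced by the hypothesis $f\in H^{d-1}_0(\Omega)$ is exactly the mechanism behind the paper's condition~$j_1\ne 0$ in~\eqref{jbf-fixed}.
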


We give some details on how to prove~\eqref{explicit-general-order-estimates}.
Essentially, we have to prove the continuity of the counterpart 
of the operator in~\eqref{def:Ttilde}
involving all the derivatives of order~$d$.
More precisely,
for each multi-index $\jbf=(j_1, \dots,j_n)$
in $\{ 1 ,\dots, d \}^n$,
$\SemiNorm{\jbf}=d$,
we study the continuity of the operator
\begin{equation} \label{def:Ttilde-general}
\Ttilde g
:= \Ttildeo g + \Ttildet g,
\end{equation}
where
\begin{equation} \label{def:Ttildeo-general}
\Ttildeo g(x)
:= \lim_{\varepsilon\to0}
  \int_{\varepsilon}^{\frac12} \int_{\Rbbn}
        \partialxjoxjn
        \left[ \varphi\left( y+ \frac{x-y}{t} \right) \right]
        g(y) \dy \frac{\dt}{t^n}
\end{equation}
and
\begin{equation} \label{def:Ttildet-general}
\Ttildet f(x)
:= \int_{\frac12}^1 \int_{\Rbbn} 
        \partialxjoxjn
        \left[ \varphi\left( y+ \frac{x-y}{t} \right) \right]
        g(y) \dy \frac{\dt}{t^n}.
\end{equation}
Above, $g$ and~$\varphi$ are precisely as in~\eqref{options-varphi}.
The general assertion then follows summing over
all possible multi-indices~$\jbf$.
In the remainder of the appendix, 
\begin{equation} \label{jbf-fixed}
\jbf \text{ is fixed,}
\qquad\qquad\qquad
j_1\ne 0.
\end{equation}

\subsection{Continuity of~$\Ttildeo$}
\label{subsection:continuity-Ttildeo-general}

We discuss here the continuity of the operator in~\eqref{def:Ttildeo-general}.
We have the two following technical results. 

\begin{lemma} \label{lemma:1-Ttilde1-general}
Let~$\jbf$ be as in~\eqref{jbf-fixed}.
If~$g$ in~\eqref{options-varphi} belongs to~$\Ccalzinf(\Rbbn)$,
then the following identity is valid:
\small{\[
\begin{split}
\widehat{\Ttildeo g}(\xi)
& = (2\pi {\rm i}  \xio)
   \lim_{\varepsilon\to 0} 
   \left[
   \sum_{\kbf,\ellbf \in \{ 1,\dots,d \}^n ,\kbf+\ellbf=(j_1-1,j_2,\dots,j_d) }
   \int_{\varepsilon}^{\frac12}
   \widehat{\partial^{\SemiNorm{\kbf}}_{x_1^{k_1},\dots,x_n^{k_n}} \varphi} (t \xi)
    \widehat{\partial^{\SemiNorm{\ellbf}}_{x_1^{\ell_1},\dots,x_n^{\ell_n}} g}(\xi) \dt 
     \right].
\end{split}
\]}\normalsize
\end{lemma}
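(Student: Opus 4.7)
The plan is to mimic the proof of Lemma~\ref{lemma:1-Ttilde1} for the second order case, carrying the Fourier side computation through the multi-derivative $\partialxjoxjn$ and then redistributing the $d-1$ surplus frequency factors by an iterated Leibniz-type identity.

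First I would replace $\Ttildeo$ by its truncation $\Ttildeoeps$, i.e., the integral from $\varepsilon$ to $\tfrac12$, so that Fubini applies on $[\varepsilon,\tfrac12]\times\Rbbn\times\Rbbn$; this is legitimate since $g\in\Ccalzinf(\Rbbn)$ and $\varphi$ is smooth and compactly supported. After swapping the integrals I would integrate by parts $d$ times in $x$, moving $\partialxjoxjn$ onto the exponential $e^{-2\pi{\rm i} x\cdot\xi}$ and producing the prefactor $\prod_{k=1}^n (2\pi{\rm i}\xi_k)^{j_k}$. The change of variable $z=y+(x-y)/t$ (so that $x=tz+(1-t)y$ and $\dx=t^n\,\dz$) then cancels the weight $t^{-n}$ and decouples the kernel, yielding
\[
\widehat{\Ttildeoeps g}(\xi)
= \prod_{k=1}^n (2\pi{\rm i}\xi_k)^{j_k}
  \int_\varepsilon^{\frac12} \varphihat(t\xi)\,\ghat((1-t)\xi)\,\dt.
\]

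The new ingredient with respect to the $d=2$ case is the redistribution of the $d-1$ remaining frequency factors after peeling off one copy of $(2\pi{\rm i}\xio)$, which is allowed by the assumption $j_1\ne 0$ in~\eqref{jbf-fixed}. These remaining factors are described by the multi-index $\mathbf{m}:=(j_1-1,j_2,\dots,j_n)$ of total length $d-1$. The key mechanism is the elementary identity
\[
2\pi{\rm i}\,\xi_k\,\varphihat(t\xi)\,\ghat((1-t)\xi)
= \widehat{\partial_{x_k}\varphi}(t\xi)\,\ghat((1-t)\xi)
 + \varphihat(t\xi)\,\widehat{\partial_{x_k} g}((1-t)\xi),
\]
obtained by splitting $2\pi{\rm i}\xi_k = t\cdot 2\pi{\rm i}\xi_k + (1-t)\cdot 2\pi{\rm i}\xi_k$ and applying~\eqref{Fourier-derivative-property} separately to $\varphihat(t\xi)$ (which absorbs the $t$ factor) and to $\ghat((1-t)\xi)$ (which absorbs the $(1-t)$ factor). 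This is precisely the Leibniz step performed once in the proof of Lemma~\ref{lemma:1-Ttilde1}. Iterating it $d-1$ times and grouping terms according to how many derivatives fall on $\varphi$ as opposed to $g$ produces the sum over pairs $(\kbf,\ellbf)$ with $\kbf+\ellbf=\mathbf{m}$ appearing in the statement.

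The main obstacle I foresee is purely combinatorial: the iterated application of the Leibniz identity naturally generates multinomial coefficients $\binom{\mathbf{m}}{\kbf}=\prod_{i=1}^n\binom{m_i}{k_i}$, so the sum should strictly speaking carry these weights, or equivalently the pairs $(\kbf,\ellbf)$ be counted with multiplicity. These coefficients depend only on $n$ and $d$, hence are harmless for the subsequent continuity estimates leading to~\eqref{explicit-general-order-estimates}, since they are absorbed in the $\lesssim_n$ symbol. Finally, pulling the limit $\varepsilon\to 0$ inside the finite sum — by dominated convergence on each summand or, more intrinsically, by continuity of the Fourier transform in $\mathcal{S}'$ — yields the claimed representation.
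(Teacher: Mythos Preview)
Your approach is essentially the paper's: truncate to $\Ttildeoeps$, Fubini, integrate by parts $d$ times, substitute $z=y+(x-y)/t$, and then split each remaining frequency factor via $2\pi{\rm i}\,\xi_k = t(2\pi{\rm i}\,\xi_k)+(1-t)(2\pi{\rm i}\,\xi_k)$. The paper carries this out only for the illustrative case $d=3$, $\jbf=(1,1,1)$, where the four resulting terms match the sum in the statement with all coefficients equal to~$1$; you describe the same mechanism abstractly as an iterated Leibniz step.

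Your remark about the multinomial coefficients is well taken and is in fact a point the paper's special case hides: when some $j_i\ge 2$ the expansion of $\prod_i(2\pi{\rm i}\,\xi_i)^{m_i}$ into $t$- and $(1-t)$-parts does produce the weights $\binom{\mathbf m}{\kbf}$, which are absent from the displayed formula. As you note, this is cosmetically relevant but harmless for the continuity estimates, since those constants depend only on $n$ and $d$ and are absorbed in the $\lesssim_n$ of Theorem~\ref{theorem:general-order-Babu-Aziz}. (Incidentally, the argument of $\widehat{\partial^{|\ellbf|}g}$ in the statement should be $(1-t)\xi$ rather than $\xi$, matching the proof of Lemma~\ref{lemma:1-Ttilde1}.)
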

\begin{proof}
The proof is a modification of that of Lemma~\ref{lemma:1-Ttilde1}.
Since this result contains most of the differences
compared to the first order case,
we prove the assertion for the second order case,
i.e., we assume that~$\jbf=(\jo,\jtw,\jth)$
with $\jo+\jtw+\jth=3$.
To further simplify the proof, we discuss the case $j_1=j_2=j_3=1$.
The general assertion is proven analogously.

In analogy to the proof of Lemma~\ref{lemma:1-Ttilde1}, we can write
\[
\begin{split}
& \widehat{\Ttildeoeps g}(\xi)  
 = (2\pi {\rm i}  \xio) (2\pi {\rm i}  \xitw)(2\pi {\rm i}  \xith)
   \int_{\varepsilon}^{\frac12}  \int_{\Rbbn} 
    \int_{\Rbbn} \varphi\left( y+ \frac{x-y}{t} \right)
        g(y) e^{-2 \pi {\rm i} x\cdot \xi} 
        \dx\dy \frac{\dt}{t^n} \\
& = (2\pi {\rm i}  \xio) \left[
   \int_{\varepsilon}^{\frac12}
    \varphihat\left( t \ \xi \right)
    [2\pi {\rm i} (1-t) \xitw][2\pi {\rm i} (1-t) \xith] \ghat((1-t)\xi) \dt \right. \\
& \qquad\qquad\qquad  +   \int_{\varepsilon}^{\frac12}
    \varphihat\left( t \ \xi \right)
    [2\pi {\rm i} t \xitw][2\pi {\rm i} (1-t) \xith] \ghat((1-t)\xi) \dt \\
& \qquad\qquad\qquad +    \int_{\varepsilon}^{\frac12}
    \varphihat\left( t \ \xi \right)
    [2\pi {\rm i} (1-t) \xitw][2\pi {\rm i} t \xith] \ghat((1-t)\xi) \dt \\
& \qquad\qquad\qquad \left.   +   \int_{\varepsilon}^{\frac12}
    \varphihat\left( t \ \xi \right)
    [2\pi {\rm i} t \xitw][2\pi {\rm i} t \xith] \ghat((1-t)\xi) \dt \right] \\
& = (2\pi {\rm i}  \xio) \left[
   \int_{\varepsilon}^{\frac12}
    \varphihat\left( t \ \xi \right)
    \widehat{\partial^2_{\xtw\xth} g}((1-t)\xi) \dt
    + \int_{\varepsilon}^{\frac12}
    \widehat{\partial_{\xtw} \varphi} (t \xi)
    \widehat{ \partial_{\xth}g}((1-t)\xi) \dt \right. \\
& \qquad\qquad\qquad \left. + \int_{\varepsilon}^{\frac12}
    \widehat{\partial_{\xth} \varphi}\left( t \ \xi \right)
    \widehat{\partial_{\xtw} g}((1-t)\xi) \dt
    + \int_{\varepsilon}^{\frac12}
    \widehat{\partial^2_{\xtw \xth} \varphi} (t \xi)
    \ghat((1-t)\xi) \dt \right] .
\end{split}
\]
This yields the assertion for the case $d=3$, and $j_1=j_2=j_3=1$.
\end{proof}
From Lemma~\ref{lemma:1-Ttilde1-general}, it is apparent that we
have to bound the norm of several derivatives of~$\varphi$,
which is what we accomplish in the next result.

\begin{lemma} \label{lemma:2-Ttildeo-general}
Let~$\varphi$ and~$\jbf$ be as in~\eqref{options-varphi} and~\eqref{jbf-fixed}.
Then, the following inequalities hold true:
for all multi-indices~$\kbf$ in~$\{ 1,\dots,d \}^n$
such that~$\SemiNorm{\kbf} \le \SemiNorm{\jbf}=d$,
\small{\begin{equation} \label{2.3Duran-general}
\begin{split}
2\pi \SemiNorm{\xio} \int_0^\infty 
        \SemiNorm{\widehat{\partial^{\SemiNorm{\kbf}}_{\xo^{k_1-1},\dots,\xn^{k_n}} \varphi}(t\xi)} \dt
& \le \Cphirhokbf \\
& := \rho^{-1} \Norm{\partial^{\SemiNorm{\kbf}}_{x_1^{k_1-1},x_2^{k_2},\dots,x_n^{k_n}} \varphi}_{L^1(\Rbbn)}
  + \rho \Norm{\partial^{\SemiNorm{\kbf}+2}_{x_1^{k_1+1},x_2^{k_2},\dots,x_n^{k_n}} \varphi}_{L^1(\Rbbn)} .
  \end{split}
\end{equation}}\normalsize
\end{lemma}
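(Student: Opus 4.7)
The approach is to reduce the statement to the base inequality~\eqref{2.3DuranA} (i.e., Lemma~2.3 of~\cite{Duran:2012}), mirroring the way~\eqref{2.3DuranB} was obtained from~\eqref{2.3DuranA} by applying the latter to~$\partial_{x_\ell}\varphi$ in place of~$\varphi$. In the present generality I would apply~\eqref{2.3DuranA} with~$j=1$ to the function
\[
\psi := \partial^{\SemiNorm{\kbf}-1}_{x_1^{k_1-1},\, x_2^{k_2},\, \dots,\, x_n^{k_n}} \varphi,
\]
which is well defined under the assumption $k_1\ge 1$ (implicitly required for the notation $x_1^{k_1-1}$ appearing in~\eqref{2.3Duran-general} to make sense) and which inherits the smoothness and compact support of~$\varphi$. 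The point is that $\psi$ is an admissible test function for~\eqref{2.3DuranA}, so that application delivers
\[
2\pi \SemiNorm{\xi_1} \int_0^\infty \SemiNorm{\widehat{\psi}(t\xi)} \dt
\le \rho^{-1} \Norm{\psi}_{L^1(\Rbbn)} + \rho \Norm{\partial^2_{x_1^2}\psi}_{L^1(\Rbbn)}.
\]

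It then remains to identify the three quantities above with those in~\eqref{2.3Duran-general}. The integrand on the left-hand side equals $\SemiNorm{\widehat{\partial^{\SemiNorm{\kbf}-1}_{x_1^{k_1-1},\dots,x_n^{k_n}}\varphi}(t\xi)}$ by the very definition of~$\psi$; the first $L^1$-norm is literally the first contribution to~$C_{\varphi,\rho,\kbf}$; and commutation of partial derivatives gives $\partial^2_{x_1^2}\psi = \partial^{\SemiNorm{\kbf}+1}_{x_1^{k_1+1},x_2^{k_2},\dots,x_n^{k_n}}\varphi$, producing the second contribution. I expect the main obstacle to be purely clerical, namely keeping the bookkeeping of differentiation orders straight; no new analytic ingredient beyond Dur\'an's lemma is needed. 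Should the range of~$\kbf$ inherited from Lemma~\ref{lemma:1-Ttilde1-general} force consideration of a multi-index with~$k_1=0$, one may simply relabel coordinates so that the direction differentiated in Dur\'an's inequality is one where the corresponding subscript is strictly positive, preserving the structural form of the bound.
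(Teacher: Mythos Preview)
Your proposal is correct and matches the paper's approach exactly: the paper's proof is the one-line remark that it is ``a modification of that of Lemma~\ref{lemma:2-Ttildeo}'', i.e.\ apply~\eqref{2.3DuranA} with~$j=1$ to the higher-order derivative of~$\varphi$, precisely as you describe. Your observation that the argument is purely bookkeeping is also apt---indeed the superscripts in~\eqref{2.3Duran-general} appear to carry minor index mismatches (e.g.\ the order of $\partial^2_{x_1^2}\psi$ is~$\SemiNorm{\kbf}+1$ rather than~$\SemiNorm{\kbf}+2$), and your remark about relabelling coordinates when $k_1=0$ is a sensible safeguard for exactly that kind of clerical issue.
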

\begin{proof}
The proof is a modification of that of Lemma~\ref{lemma:2-Ttildeo}.
\end{proof}

The two above technical lemmas give the following result.

\begin{proposition} \label{proposition:Ttildeo-general}
Let~$\varphi$ be any of the two options in~\eqref{options-varphi}.
Then, for all~$g$ in $H^d(\Rbb^n)$,
the operator~$\Ttildeo$ defined in~\eqref{def:Ttildeo-general}
satisfies the following continuity property:
\[
\Norm{\Ttildeo g}_{0,\Rbbn}
\le 2^{\frac{n-1}{2}}
     \sum_{\kbf,\ellbf \in \{ 1,\dots,d \}^n ,\kbf+\ellbf=(j_1-1,\dots,j_d) }
            \left[\Cphirhokbf
            \Norm{\partial^{\SemiNorm{\ellbf}}_{x_1^{\ell_1},\dots,x_n^{\ell_n}} g}_{0,\Rbbn} \right],
\]
where~$\varphi$ is any of the two options in~\eqref{options-varphi}.
\end{proposition}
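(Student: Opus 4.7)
The plan is to mimic the proof of Proposition~\ref{proposition:Ttildeo} line by line, using the richer Fourier-side decomposition supplied by Lemma~\ref{lemma:1-Ttilde1-general} in place of the two-term decomposition of Lemma~\ref{lemma:1-Ttilde1}. By a standard density argument, it suffices to prove the bound for~$g\in\Ccalzinf(\Rbbn)$. Then Lemma~\ref{lemma:1-Ttilde1-general} writes $\widehat{\Ttildeo g}(\xi)$ as a finite sum, indexed by the splittings $\kbf+\ellbf=(j_1-1,j_2,\dots,j_n)$, of terms of the shape
\[
T_{\kbf,\ellbf}(\xi)
:= (2\pi {\rm i}\,\xio)\lim_{\varepsilon\to 0}\int_{\varepsilon}^{\frac12}
\widehat{\partial^{\SemiNorm{\kbf}}_{x_1^{k_1},\dots,x_n^{k_n}}\varphi}(t\xi)\,
\widehat{\partial^{\SemiNorm{\ellbf}}_{x_1^{\ell_1},\dots,x_n^{\ell_n}} g}\bigl((1-t)\xi\bigr)\dt.
\]
By the triangle inequality in~$L^2(\Rbbn)$, it then suffices to estimate $\Norm{T_{\kbf,\ellbf}}_{0,\Rbbn}$ by $2^{(n-1)/2}\,\Cphirhokbf\,\Norm{\partial^{\SemiNorm{\ellbf}}_{x_1^{\ell_1},\dots,x_n^{\ell_n}} g}_{0,\Rbbn}$ and then sum over the (finitely many) splittings.

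For each fixed $(\kbf,\ellbf)$, I would repeat the second half of the proof of Proposition~\ref{proposition:Ttildeo} verbatim, with $\widehat{\partialxl\varphi}$ replaced by $\widehat{\partial^{\SemiNorm{\kbf}}_{x_1^{k_1},\dots,x_n^{k_n}}\varphi}$ and $\ghat$ replaced by $\widehat{\partial^{\SemiNorm{\ellbf}}_{x_1^{\ell_1},\dots,x_n^{\ell_n}} g}$. Concretely: apply the Cauchy-Schwarz inequality to the $t$-integral with weight $2\pi\SemiNorm{\xio}\SemiNorm{\widehat{\partial^{\SemiNorm{\kbf}}\varphi}(t\xi)}$, so that the weight integral is controlled by $\Cphirhokbf$ thanks to Lemma~\ref{lemma:2-Ttildeo-general}. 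Then integrate the resulting pointwise bound over $\xi\in\Rbbn$, make the successive changes of variables $\eta=(1-t)\xi$ and $s=t/(1-t)$ used in Proposition~\ref{proposition:Ttildeo} (which contribute the factor $2^{n-1}$ via the Jacobian computation $(1-t)^{-(n+1)}\dt=(1+s)^{n-1}\ds$), invoke Lemma~\ref{lemma:2-Ttildeo-general} a second time to collapse the resulting inner integral against another factor $\Cphirhokbf$, and finally apply the Fourier isometry~\eqref{isometry:Fourier} to pass from $\widehat{\partial^{\SemiNorm{\ellbf}} g}$ back to $\partial^{\SemiNorm{\ellbf}} g$. Taking the square root gives the bound $\Norm{T_{\kbf,\ellbf}}_{0,\Rbbn}\le 2^{(n-1)/2}\Cphirhokbf\Norm{\partial^{\SemiNorm{\ellbf}}_{x_1^{\ell_1},\dots,x_n^{\ell_n}} g}_{0,\Rbbn}$.

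The only genuinely new ingredient compared to the first order case is the combinatorial decomposition in Lemma~\ref{lemma:1-Ttilde1-general}; once that is in hand, no new analytic idea is required, and the main obstacle is purely bookkeeping the multi-indices so that the pairing $\Cphirhokbf$ with $\Norm{\partial^{\SemiNorm{\ellbf}} g}_{0,\Rbbn}$ is reconstructed exactly as in the statement. I expect the density step from $\Ccalzinf(\Rbbn)$ to $H^d(\Rbbn)$ (or its zero-trace restriction needed to pass to the $\Omega$-version) to be routine because each $\Cphirhokbf$ is finite thanks to the smoothness and compact support of~$\varphi$, so that all operators involved extend continuously.
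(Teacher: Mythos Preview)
Your proposal is correct and follows essentially the same approach as the paper: the paper's proof is a one-line remark that the argument of Proposition~\ref{proposition:Ttildeo} is to be repeated using Lemmas~\ref{lemma:1-Ttilde1-general} and~\ref{lemma:2-Ttildeo-general}, and your write-up is precisely a faithful fleshing-out of that sketch, treating every summand $T_{\kbf,\ellbf}$ by the Cauchy--Schwarz/change-of-variables argument used for~$\widehat{\Ttildeot g}$ in the first-order case.
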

\begin{proof}
The proof is a modification of that of Proposition~\ref{proposition:Ttildeo},
and further combines Lemmas~\ref{lemma:1-Ttilde1-general} and~\ref{lemma:2-Ttildeo-general}.
\end{proof}

\subsection{Continuity of~$\Ttildet$}
\label{subsection:continuity-Ttildet-general}

We discuss here the continuity of the operator in~\eqref{def:Ttildet-general}.
We have the following result.

\begin{proposition} \label{proposition:Ttildet-general}
Let~$g$, $\Ttildet$, and~$\jbf$ be as
in~\eqref{options-varphi}, \eqref{def:Ttildet-general}, and~\eqref{jbf-fixed}.
Assume that~$g$ belongs to~$L^2(\Rbbn)$ and has support contained in~$\Omega$.
Given $1\le p < n/(n-1)$
and~$p'$ the conjugate index of~$p$,
the following inequality holds true:
\[
\Norm{\Ttildet g}_{0,\Omega}
\le \frac{2^{\frac{n}{2}}}{(1-\frac{n}{p'})^{\frac{p}{2}}}
    \SemiNorm{\Omega}^{1-\frac p2}
  \Norm{\partial^d_{x_{1}^{j_1}\dots x_n^{j_n}} \varphi}_{L^1(\Omega)}^{\frac{p}{2}}
  \Norm{\partial^d_{x_{1}^{j_1}\dots x_n^{j_n}} \varphi}_{L^\infty(\Omega)}^{1-\frac{p}{2}}
  \Norm{g}_{0,\Omega}.
\]
\end{proposition}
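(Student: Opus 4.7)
The plan is to repeat the argument of Proposition~\ref{proposition:Ttildet} (the first-order case, itself a direct adaptation of \cite[Lemma~3.2]{Duran:2012}) with the order of differentiation on $\varphi$ raised from $2$ to $d$. The order of derivation enters only through the chain rule and a harmless power of $t$ on the compact interval $[1/2,1]$, so I do not redo the whole argument from scratch but only indicate where $d$ intervenes.

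First, I apply the chain rule to the composition $\varphi(y + (x-y)/t)$. Each derivative in $x$ extracts a factor $1/t$, hence
\[
\partial^d_{x_1^{j_1}\dots x_n^{j_n}}\!\left[\varphi\!\left(y + \frac{x-y}{t}\right)\right] = t^{-d}\, (\partial^d_{x_1^{j_1}\dots x_n^{j_n}} \varphi)\!\left(y + \frac{x-y}{t}\right),
\]
and consequently
\[
\Ttildet g(x) = \int_{1/2}^1 \int_{\Rbb^n} t^{-n-d}\, (\partial^d \varphi)\!\left(y + \frac{x-y}{t}\right) g(y)\, dy\, dt.
\]
Since $t \in [1/2, 1]$, the factor $t^{-n-d}$ is uniformly bounded by $2^{n+d}$ and no singular contribution arises from the $t$-integration. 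This brings the integrand into exactly the same form as in the first-order case, with $\partial^d \varphi$ playing the role of $\partial^2 \varphi$ and with $t^{-n-d}$ in place of $t^{-n-2}$.

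Second, I mimic step by step the estimation in \cite[Lemma~3.2]{Duran:2012}. Using $\supp g \subset \Omega$, I apply H\"older's inequality with exponents $p$ and $p'$ in the $y$-integral, then a change of variables of the form $z = y + (x-y)/t$ (whose Jacobian introduces factors of $t$ and $1-t$) to decouple the $t$-integration from the spatial one, and finally the interpolation
\[
\|\partial^d \varphi\|_{L^p(\Omega)} \le \|\partial^d \varphi\|_{L^1(\Omega)}^{1/p}\, \|\partial^d \varphi\|_{L^\infty(\Omega)}^{1-1/p}
\]
between $L^1$ and $L^\infty$ norms, which after combining with the Cauchy-Schwarz step on the $L^2$ norm of $\Ttildet g$ produces the factors $\|\partial^d \varphi\|_{L^1(\Omega)}^{p/2}$ and $\|\partial^d \varphi\|_{L^\infty(\Omega)}^{1-p/2}$ in the statement. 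The factor $|\Omega|^{1-p/2}$ comes from H\"older applied on the bounded support of $g$, while the sensitive prefactor $(1 - n/p')^{-p/2}$ comes from the remaining $t$-integral, which is of the form $\int_{1/2}^1 (1-t)^{-n/p'}\, dt$ and is finite exactly when $p < n/(n-1)$.

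The only step that requires genuine care, already present in the first-order case, is the alignment of the H\"older and interpolation exponents so as to produce the precise prefactor $(1-n/p')^{-p/2}$ together with the precise powers $p/2$ and $1-p/2$ on the two norms of $\partial^d \varphi$. Since $d$ does not interact with this alignment (it affects only the overall power of $t$, which stays bounded on $[1/2,1]$), the bookkeeping carries over verbatim from the $d=2$ case treated in Proposition~\ref{proposition:Ttildet}; this is arguably the main technical obstacle, but it has already been resolved there and in \cite{Duran:2012}.
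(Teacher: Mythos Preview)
Your proposal is correct and follows precisely the approach the paper itself takes: the paper's proof of this proposition is the single line ``The proof is a modification of that of Proposition~\ref{proposition:Ttildet},'' which in turn defers to \cite[Lemma~3.2]{Duran:2012} with ``the only difference being the number of derivatives of~$\varphi$.'' Your sketch spells out that modification---the chain rule producing $t^{-d}$, the boundedness of powers of $t$ on $[1/2,1]$, and the unchanged H\"older/interpolation bookkeeping---in more detail than the paper does, but the route is the same.
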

\begin{proof}
The proof is a modification of that of Proposition~\ref{proposition:Ttildet}.
\end{proof}

\subsection{Continuity of~$\Ttilde$}
\label{subsection:continuity-Ttilde-general}

We discuss here the continuity of the operator in~\eqref{def:Ttilde-general}.
We have the following result.

\begin{theorem} \label{theorem:continuity-Ttilde-general}
Let~$g$, $\Ttilde$, and~$\jbf$
be as in~\eqref{options-varphi}, \eqref{def:Ttilde-general}, and~\eqref{jbf-fixed}.
Assume that~$g$ belongs to~$H^{d-1}_0(\Omega)$.
Given $1\le p < n/(n-1)$
and~$p'$ the conjugate index of~$p$,
the following inequality holds true:
\footnotesize{\[
\begin{split}
& \Norm{\Ttilde g}_{0,\Omega} \\
& \le 2^{\frac{n-1}{2}} \!\!\!\!\!\!\!\!\!\!\!\!\!\!\!
       \sum_{\kbf,\ellbf \in \{ 1,\dots,d \}^n ,\kbf+\ellbf=(j_1-1,\dots,j_d) }
            \left[
            \left(  \rho^{-1} \Norm{\partial^{\SemiNorm{\kbf}}_{x_1^{k_1-1},x_2^{k_2} \dots,x_n^{k_n}} \varphi}_{L^1(\Rbbn)}
            \!\!\!\!\!\!\!\! +
            \rho \Norm{\partial^{\SemiNorm{\kbf}+2}_{x_1^{k_1+1},x_2^{k_2},\dots,x_n^{k_n}} \varphi}_{L^1(\Rbbn)} \right)
            \Norm{\partial^{\SemiNorm{\ellbf}}_{x_1^{\ell_1},\dots,x_n^{\ell_n}} g}_{0,\Rbbn} \right] \\
&\quad + \frac{2^{\frac{n}{2}}}{(1-\frac{n}{p'})^{\frac{p}{2}}}
  \SemiNorm{\Omega}^{1-\frac p2}
  \Norm{\partialxjoxjn \varphi}_{L^1(\Omega)}^{\frac{p}{2}}
  \Norm{\partialxjoxjn \varphi}_{L^\infty(\Omega)}^{1-\frac{p}{2}}
  \Norm{g}_{0,\Omega}.
\end{split}
\]}\normalsize
\end{theorem}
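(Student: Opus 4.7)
The plan is to mimic the proof of Theorem~\ref{theorem:continuity-Ttilde}: split $\Ttilde$ as $\Ttildeo + \Ttildet$ according to the definition~\eqref{def:Ttilde-general}, apply the triangle inequality in $L^2(\Omega)$, and estimate the two pieces by invoking the two continuity results proved in Subsections~\ref{subsection:continuity-Ttildeo-general} and~\ref{subsection:continuity-Ttildet-general}, together with the explicit representation of the constants $\Cphirhokbf$ provided by Lemma~\ref{lemma:2-Ttildeo-general}.

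First I would bound $\Norm{\Ttildeo g}_{0,\Omega}$ by invoking Proposition~\ref{proposition:Ttildeo-general}; since $g$ has support inside $\Omega$, the $L^2(\Rbbn)$ norms on the right-hand side coincide with the corresponding $L^2(\Omega)$ norms, producing the sum over $\kbf$ and $\ellbf$ with coefficients $\Cphirhokbf$. Then I would bound $\Norm{\Ttildet g}_{0,\Omega}$ directly via Proposition~\ref{proposition:Ttildet-general}, which yields the final summand of the claimed estimate verbatim. Finally I would substitute the explicit formula~\eqref{2.3Duran-general} for $\Cphirhokbf$ into the first bound, producing the $\rho^{-1}$ and $\rho$ prefactors together with the $L^1$ norms of the appropriate derivatives of $\varphi$; adding the two pieces yields exactly the right-hand side of the statement.

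There is no genuinely new analytic difficulty at this step: the Fourier-analytic reasoning on $\Ttildeo$ and the $L^p$-interpolation argument on $\Ttildet$ have already been adapted to the $d$-th order setting in Propositions~\ref{proposition:Ttildeo-general} and~\ref{proposition:Ttildet-general}. The only source of care is the bookkeeping of the multi-index set $\{(\kbf,\ellbf) \in (\{1,\dots,d\}^n)^2 \mid \kbf+\ellbf=(j_1-1,j_2,\dots,j_n)\}$ coming from the iterated Leibniz rule of Lemma~\ref{lemma:1-Ttilde1-general}; the indices must be transported unchanged through the substitution of~\eqref{2.3Duran-general}, but no further estimate is required. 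In this sense, the proof reduces to a pure assembly step and parallels, almost verbatim, the proof of Theorem~\ref{theorem:continuity-Ttilde} in the first-order case.
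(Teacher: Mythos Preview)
Your proposal is correct and follows exactly the paper's own argument: the paper's proof is a one-line statement that the assertion follows by combining Propositions~\ref{proposition:Ttildeo-general} and~\ref{proposition:Ttildet-general} with the explicit representation of the constants~$\Cphirhokbf$ in~\eqref{2.3Duran-general}. Your write-up is in fact more detailed than the paper's, and the bookkeeping concern you flag about transporting the multi-indices is the only thing to watch, as you say.
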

\begin{proof}
The assertion follows combining
Lemmas~\ref{proposition:Ttildeo-general} and~\ref{proposition:Ttildet-general},
and the explicit representation of the constants
$\Cphirhokbf$ in~\eqref{2.3Duran-general}.
\end{proof} 

Theorem~\ref{theorem:general-order-Babu-Aziz} follows using
Theorem~\ref{theorem:continuity-Ttilde-general},
the chain rule, and proceeding as in the proof
of Theorem~\ref{theorem:1st-Babu-Aziz}.
\medskip 

An arbitrary order Ne\v cas-Lions,
generalising the first order version in~\eqref{1st-Necas-Lions},
may be shown based on Theorem~\ref{theorem:general-order-Babu-Aziz}
following the proof of Proposition~\ref{proposition:1st-Necas-Lions};
for $d$ in $\Nbb$, it reads
\[
\Norm{f}_{H^{-(d-1)}(\Omega) / \Pbb_{d-1}(\Omega)}
\le \CNLd \Norm{\nabla f}_{-d,\Omega}
\qquad\qquad \forall f \in H^{-(d-1)}(\Omega) / \Pbb_{d-1}(\Omega),
\]
where $H^{-(d-1)}(\Omega) / \Pbb_{d-1}(\Omega)$
is the space~$H^{-(d-1)}(\Omega)$
equipped with the norm
\[
\Norm{f}_{H^{-(d-1)}(\Omega) / \Pbb_{d-1}(\Omega)}
:= \inf_{q_{d-1} \in \Pbb_{d-1}(\Omega)}
    \Norm{f- q_{d-1}}_{-(d-1),\Omega}.
\]
The constant~$\CNLd$ depends on the Babu\v ska-Aziz constants of all orders up to~$d$
and the Poincar\'e constant through~\cite{Verfurth:1999}.

\section{An alternative proof of the zero boundary conditions} \label{appendix:alternative-zero-trace}
We present here an alternative proof
of the properties discussed in Remark~\ref{remark:boundary-conditions},
as it is based on some technical results
that are stated in the literature,
the proof of which we were not able to find.
Such results may be useful in the derivation
of explicit estimates for the right-inverse of the divergence
in $W^{k,p}$ spaces;
see Remark~\ref{remark:nonhilbertian-Setting} below for additional comments on this point.

We begin by providing the reader with a detailed proof of
an alternative expression for the first derivatives of~$\ubf$,
which has been stated in \cite[Remark III.3.2]{Galdi:2011},
and then proceed along the same lines
as in \cite[Chapter~2]{Acosta-Duran:2017}.

We start by showing a preliminary result,
which requires the definition
of an operator
$\Gbftilde_{j}:\Omega\times\Omega\to \Rbb$ given by
\begin{equation} \label{def:Gbftilde}
\Gbftilde_{j}(x,y)
:=  \int_0^1 \frac{x-y}{t} \  \partial_{x_j}\omega
    \left( y + \frac{x-y}{t}  \right) \frac{\dt}{t^n}
    \qquad   \forall j =1,\dots, n.
\end{equation}
\begin{lemma}
Let~$\Gbf$ and~$\Gbftilde_{j}$,
$j =1,\dots, n$, be defined
as in~\eqref{eq:kernel_def} and~\eqref{def:Gbftilde}.
Then,
for all positive $\varepsilon$,
the following identity holds true:
\begin{equation}    \label{eq:rel_difG}
\partial_{x_j} \Gbf(x,y)
= - \partial_{y_j} \Gbf(x,y) + \Gbftilde_{j}(x,y)
\qquad\qquad \forall \SemiNorm{x-y}>\varepsilon.
\end{equation}
\end{lemma}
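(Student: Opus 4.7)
The plan is to compute both sides of the claimed identity directly by applying the chain rule under the integral sign; the role of the hypothesis $|x-y|>\varepsilon$ is only to justify swapping differentiation and integration (for $t$ small, the argument $y+(x-y)/t$ of $\omega$ lies outside $\supp\omega\subset\Brho$ uniformly in a neighbourhood of $(x,y)$, so all integrands are smooth and compactly supported in $t$ away from $0$).

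First I would introduce the shorthand $z=z(x,y,t):=y+(x-y)/t$ and record the elementary chain-rule identities
\[
\partial_{x_j} z_i = \frac{\delta_{ij}}{t},
\qquad
\partial_{y_j} z_i = \delta_{ij}\!\left(1-\tfrac{1}{t}\right),
\qquad
\partial_{x_j} z_i + \partial_{y_j} z_i = \delta_{ij}.
\]
Composing with $\omega$ gives
\[
\partial_{x_j}\bigl[\omega(z)\bigr] = \tfrac{1}{t}(\partial_j\omega)(z),
\qquad
\partial_{y_j}\bigl[\omega(z)\bigr] = \bigl(1-\tfrac{1}{t}\bigr)(\partial_j\omega)(z),
\]
so that crucially $(\partial_{x_j}+\partial_{y_j})\omega(z) = (\partial_j\omega)(z)$. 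Similarly, $\partial_{x_j}\!\left[(x_k-y_k)/t\right]=\delta_{jk}/t$ and $\partial_{y_j}\!\left[(x_k-y_k)/t\right]=-\delta_{jk}/t$, so these two contributions cancel when added.

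Next I would differentiate the $k$-th component $G_k(x,y)=\int_0^1 \frac{x_k-y_k}{t}\,\omega(z)\,\frac{\dt}{t^n}$ using the product rule, and add $\partial_{x_j}G_k$ to $\partial_{y_j}G_k$. By the cancellation just noted, only the term in which both derivatives act on $\omega$ survives, and it sums up to $(\partial_j\omega)(z)$. Thus
\[
\partial_{x_j}G_k + \partial_{y_j}G_k
= \int_0^1 \frac{x_k-y_k}{t}\,(\partial_j\omega)(z)\,\frac{\dt}{t^{n+1}},
\]
which, reading the paper's notation $\partial_{x_j}\omega(\,\cdot\,)$ as the partial derivative of $\omega$ evaluated at its argument, is precisely $\Gbftilde_j(x,y)_k$ from~\eqref{def:Gbftilde}. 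Rearranging yields~\eqref{eq:rel_difG}.

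I do not anticipate a serious obstacle: the computation is a bookkeeping exercise in the chain rule, and the only analytic point is the legitimacy of differentiating under the integral. This is handled by noting that on $\{|x-y|>\varepsilon\}$ one has $|z|\geq|x-y|/t - |y|$, so for $t\leq t_0(\varepsilon,\Omega)$ sufficiently small the argument escapes $\supp\omega$; hence the $t$-integrand, together with all its $(x,y)$-derivatives appearing in the argument, is supported on a compact subinterval of $(0,1]$ on which the integrand is uniformly bounded. Standard dominated-convergence arguments then validate each differentiation under the integral.
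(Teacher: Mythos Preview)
Your argument is correct and is essentially the same direct chain-rule computation carried out in the paper, only organized slightly more efficiently by forming $\partial_{x_j}G_k+\partial_{y_j}G_k$ at the outset so that the $\delta_{jk}$ terms and the $1/t$ contributions visibly cancel. One slip to fix: in your final displayed integral the measure should be $\dt/t^{n}$, not $\dt/t^{n+1}$, since after the cancellation $(\partial_{x_j}+\partial_{y_j})\omega(z)=(\partial_j\omega)(z)$ you retain the original factor $\frac{x_k-y_k}{t}\,\frac{\dt}{t^n}$; with that correction the expression matches $\Gbftilde_j$ exactly.
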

\begin{proof}
We fix two indices $j,k =1,\dots, n$
and show the assertion
on the $k$-th components of $\Gbf$ and~$\Gbftilde$.
The fact that~$\omega$
belongs to~$\Ccalzinf(\Brho)$
and direct computations reveal
\[
\partial_{x_j} \Gbf_k(x,y) 
= \int_0^1 \left[ \delta_{kj}\ \omega \left( y + \frac{x-y}{t}  \right)
  + \frac{(x-y)_k}{t} \  \partial_{x_j}\omega
  \left( y + \frac{x-y}{t}  \right) 
  \right]\frac{\dt}{t^{n+1}}
\]
and
\[
\begin{split}
\partial_{y_j} \Gbf_k(x,y)
&= \int_0^1 \left[ \frac{-\delta_{kj}}{t} \ \omega 
        \left( y + \frac{x-y}{t}  \right)
   +\frac{(x-y)_k}{t} \  \partial_{x_j}\omega
        \left( y + \frac{x-y}{t}  \right)
        \left(1-\frac1t \right) \right] \frac{\dt}{t^n} \\
&= - \int_0^1 \left[ \delta_{kj}\ \omega 
        \left( y + \frac{x-y}{t}  \right)
    +\frac{(x-y)_k}{t} \  \partial_{x_j}\omega 
        \left( y + \frac{x-y}{t}  \right)\right]
        \frac{\dt}{t^{n+1}}
    + (\Gbftilde_{j})_k(x,y).
\end{split}
\]
A combination of the two previous displays gives the assertion.
\end{proof}

Next, we prove an identity involving the first derivatives of~$\ubf$.
\begin{proposition}[Galdi's formula]\label{proposition:Galdi}
Let~$\Gbf$ and~$\Gbftilde_{j}$,
$j =1,\dots, n$, be defined
as in~\eqref{eq:kernel_def} and~\eqref{def:Gbftilde}.
Given~$f$ in~$H^1_0(\Omega)$,
consider~$\ubf$ as in~\eqref{eq:bogovskii}.
Then, the following identity holds true:
\begin{equation}\label{eq:galdi_form}
\partial_{x_j} \ubf (x) 
= \int_\Omega \Gbf(x,y) \ \partial_{y_j}f(y)\ \mathrm{d}y 
    + \int_\Omega \Gbftilde_{j}(x,y)\ f(y)\ \mathrm{d}y
    \qquad\forall  j =1,\dots, n.
\end{equation}
\end{proposition}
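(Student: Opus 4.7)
The strategy is to differentiate under the integral using an appropriate regularization that circumvents the singularity of the kernel~$\Gbf(x,y)$ of order~$|x-y|^{1-n}$ at the diagonal, substitute the pointwise identity~\eqref{eq:rel_difG}, and integrate by parts in the $y$-variable. The zero-trace condition implicit in $f\in H^1_0(\Omega)$ will discard the boundary contribution on $\partial\Omega$, and the identity~\eqref{eq:rel_difG} transfers the $x_j$-derivative off the singular kernel, which is exactly what is needed to obtain~\eqref{eq:galdi_form}.

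In more detail, I would introduce the truncated kernels $\Gbf^\varepsilon$ and $\Gbftilde_j^\varepsilon$ obtained by restricting the $t$-integration in~\eqref{eq:kernel_def} and~\eqref{def:Gbftilde} to $[\varepsilon,1]$, together with the corresponding regularized field $\ubf^\varepsilon(x):=\int_\Omega \Gbf^\varepsilon(x,y)f(y)\dy$. For each positive~$\varepsilon$, $\Gbf^\varepsilon$ is smooth on $\Omega\times\Omega$, so differentiation under the integral is legitimate. Inspection of the computation that produced~\eqref{eq:rel_difG} shows that it is a purely pointwise algebraic identity at the integrand level (no integration by parts in~$t$ is performed), so the same identity is valid with $\Gbf$ and $\Gbftilde_j$ replaced by $\Gbf^\varepsilon$ and $\Gbftilde_j^\varepsilon$, with no extra boundary term at $t=\varepsilon$. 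Substituting this identity into the formula for $\partial_{x_j}\ubf^\varepsilon$ and integrating by parts in~$y$ on the summand involving $-\partial_{y_j}\Gbf^\varepsilon$, the contribution on $\partial\Omega$ vanishes by density of $\Ccalzinf(\Omega)$ in $H^1_0(\Omega)$, yielding the regularized counterpart of~\eqref{eq:galdi_form}.

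The final step is to pass to the limit $\varepsilon\to 0$. This is the main obstacle: one must verify that $\ubf^\varepsilon\to\ubf$ (say, in $L^1_{\mathrm{loc}}(\Omega)$, hence in the sense of distributions on $\Omega$) and that the two regularized integrals on the right-hand side converge to the corresponding integrals in~\eqref{eq:galdi_form}. Convergence is driven by the observation that, for fixed $x\neq y$, the integrands in $\Gbf$ and $\Gbftilde_j$ are supported in a compact subinterval of $(0,1]$ in the $t$-variable, so pointwise convergence is automatic; uniform integrability is handled by the change of variable $z=y+(x-y)/t$, the same device underlying Lemma~\ref{lemma:1-Ttilde1} and the Fourier-transform techniques of Section~\ref{subsection:preliminary-results}. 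Once this is settled, the distributional identity produced by passing to the limit coincides almost everywhere with~\eqref{eq:galdi_form}, concluding the argument.
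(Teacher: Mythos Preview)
Your plan is correct but follows a genuinely different route from the paper's own proof. The paper works in the weak sense: it pairs~$\ubf_k$ against~$\partial_{x_j}\phi$ for a test function~$\phi\in\Ccalzinf(\Omega)$, regularizes by excising a spatial ball~$\{|x-y|\le\varepsilon\}$ (not by truncating the $t$-integration), integrates by parts first in~$x$ over the punctured region, applies~\eqref{eq:rel_difG}, and then integrates by parts in~$y$. This produces two boundary integrals on the sphere~$\{|x-y|=\varepsilon\}$, one from each integration by parts, and the crux of the argument is that these two terms cancel exactly. By contrast, you regularize in the~$t$-variable, which makes~$\Gbf^\varepsilon$ genuinely smooth across the diagonal so that no singular boundary terms ever appear; a single integration by parts in~$y$ then suffices. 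Your route is cleaner and meshes naturally with the $t$-truncation already used in Section~\ref{subsection:preliminary-results}, whereas the paper's route follows the Acosta--Dur\'an template and keeps the argument closer to classical principal-value singular-integral techniques. The only point in your sketch that would need to be spelled out carefully is the limit passage~$\varepsilon\to0$ on both sides; for this it is simplest to first take~$f\in\Ccalzinf(\Omega)$, where the $t$-support observation you mention gives the pointwise identity directly, and then invoke density together with the zeroth-order estimate~\eqref{0th-Babu-Aziz} applied to~$\partial_{y_j}f$ and to the~$\Gbftilde_j$-operator.
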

\begin{proof}
Without loss of generality, we assume
that $f$ belongs to $\Ccalzinf(\Omega)$;
the general assertion follows from a density argument.
Moreover, we prove the assertion on the $k$-th components
of $\ubf$, $\Gbf$, and~$\Gbftilde$.

For any~$\phi$ in~$\Ccalzinf(\Omega)$ and $j,k =1,\dots, n$, proceeding as in \cite[Lemma 2.3]{Acosta-Duran:2017},
we have
\[
\begin{split}       
-\int_\Omega
& \ubf_k(x)\ \partial_{x_j} \phi(x)\ \mathrm{d}x 
 = -\int_\Omega \int_\Omega \Gbf_k(x,y) f(y)\
    \partial_{x_j} \phi(x)\  \mathrm{d}x\ \mathrm{d}y \\
& = -\int_\Omega f(y)\ \lim_{\varepsilon\to 0}
        \left(\int_{|y-x|>\varepsilon} \Gbf_k(x,y) \ 
        \partial_{x_j} \phi(x)\  \mathrm{d}x\right) \mathrm{d}y \\
& = \int_\Omega f(y)\ \lim_{\varepsilon\to 0}
        \left(\int_{|y-x|>\varepsilon} \partial_{x_j}\Gbf_k(x,y)\
        \phi(x)\  \mathrm{d}x 
        -\int_{|y-\xi|=\varepsilon} \Gbf_k(\xi,y)\ \phi(\xi)
        \frac{y_j-\xi_j}{|y_j-\xi_j|} \mathrm{d}\xi\right) \mathrm{d}y.
\end{split}
\]
Using \eqref{eq:rel_difG} and switching
the order of integration lead us to
\[
\begin{split}       
-\int_\Omega & \ubf_k(x)\
 \partial_{x_j} \phi(x)\ \mathrm{d}x 
    = -\lim_{\varepsilon\to0} \int_\Omega \phi(x) 
    \left(\int_{|y-x|>\varepsilon}  \partial_{y_j}\Gbf_k(x,y)\ 
        f(y)\ \mathrm{d}y\right) \mathrm{d}x \\
& + \int_\Omega \int_\Omega (\Gbftilde_{j})_k(x,y)\ 
        f(y)\ \phi(x)\ \mathrm{d}x\  \mathrm{d}y
    -\lim_{\varepsilon\to0} \int_\Omega\int_{|y-\xi|=\varepsilon} 
        \Gbf_k(\xi,y)\ \phi(\xi)\ f(y) 
        \frac{y_j-\xi_j}{|y_j-\xi_j|} \mathrm{d}\xi\ \mathrm{d}y \\
& =: \mathcal{I}_1 + \mathcal{I}_2 + \mathcal{I}_3.
\end{split}
\]
Integrating by parts with respect to the~$y$ variable
and recalling that~$f$ belongs to~$\Ccalzinf(\Omega)$,
we obtain
\[
\begin{aligned} 
\mathcal{I}_1
& = \lim_{\varepsilon\to0}\int_\Omega \phi(x)
        \left(\int_{|x-y|>\varepsilon} \Gbf_k(x,y)\ \partial_{y_j}f(y)\ \mathrm{d}y
    -  \int_{|x-\zeta|=\varepsilon} \Gbf_k(x,\zeta)\ f(\zeta)
        \frac{x_j-\zeta_j}{|x_j-\zeta_j|} \mathrm{d}\zeta \right)\mathrm{d}x \\
& = \int_\Omega \int_\Omega \Gbf_k(x,y)\ 
    \partial_{y_j}f(y)\ \phi(x)\ \mathrm{d}y \ \mathrm{d}x 
    + \lim_{\varepsilon\to0}\int_\Omega \int_{|\zeta-x|=\varepsilon} 
        \Gbf_k(x,\zeta)\ f(\zeta)\ \phi(x)
        \frac{\zeta_j-x_j}{|\zeta_j-x_j|}
        \mathrm{d}x\ \mathrm{d}\zeta \\
& = \int_\Omega\int_\Omega \Gbf_k(x,y)\ \partial_{y_j}f(y)\ 
    \phi(x)\ \mathrm{d}y \ \mathrm{d}x - \mathcal{I}_3.
\end{aligned}
\]
Combining the two above displays, we infer
\[
-\int_\Omega \ubf_k(x)\ \partial_{x_j} \phi(x)\ \mathrm{d}x =
\int_\Omega\left(\int_\Omega \Gbf_k(x,y)\ \partial_{y_j}f(y)\ \mathrm{d}y +
\int_\Omega (\Gbftilde_{j})_k(x,y)\ f(y)\ \mathrm{d}y\right) \phi(x)\ \mathrm{d}x,
\]
which gives~\eqref{eq:galdi_form} for any~$f$
in~$\Ccalzinf(\Omega)$.
\end{proof}

We are in a position to prove that~$\ubf$
satisfies homogeneous boundary conditions.

\begin{corollary}
Given~$f$ in~$H^1_0(\Omega)\cap L^2_0(\Omega)$,
consider~$\ubf$ as in~\eqref{eq:bogovskii}.
Then, $\ubf$ belongs to~$[H^2_0(\Omega)]^n$.
\end{corollary}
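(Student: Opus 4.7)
The plan is to combine Theorem~\ref{theorem:1st-Babu-Aziz}, which gives $\ubf \in [H^2(\Omega)]^n$, with Galdi's formula from Proposition~\ref{proposition:Galdi}, in order to pin down the boundary behaviour of $\nabla\ubf$. First I would recall that the lowest-order Bogovski\u i analysis of the formula~\eqref{eq:Bogo_rightinv} already produces $\ubf\in[H^1_0(\Omega)]^n$ as soon as $f\in L^2_0(\Omega)$ (this is how inequality~\eqref{0th-Babu-Aziz} is obtained); the present hypothesis $f\in H^1_0(\Omega)\cap L^2_0(\Omega)$ therefore ensures $\ubf\in[H^1_0(\Omega)]^n \cap [H^2(\Omega)]^n$, and the only remaining point is to show that the trace of each partial derivative $\partial_{x_j}\ubf$ on $\partial\Omega$ vanishes.

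For this I would invoke Galdi's formula~\eqref{eq:galdi_form} to split, for $j=1,\dots,n$,
\[
\partial_{x_j}\ubf(x) = \int_\Omega \Gbf(x,y)\, \partial_{y_j} f(y)\dy + \int_\Omega \Gbftilde_{j}(x,y)\, f(y)\dy =: \textbf{I}_j(x)+\textbf{II}_j(x),
\]
and treat the two summands separately. The first one, $\textbf{I}_j$, is almost free: since $f$ has zero trace, the divergence theorem yields $\int_\Omega \partial_{y_j}f\dy=0$, so $\partial_{y_j}f\in L^2_0(\Omega)$, and $\textbf{I}_j$ is nothing but the Bogovski\u i operator from~\eqref{eq:Bogo_rightinv} applied to an admissible $L^2_0$ datum. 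Hence $\textbf{I}_j\in[H^1_0(\Omega)]^n$ by the same lowest-order argument invoked at the outset.

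The delicate piece is $\textbf{II}_j$, whose kernel $\Gbftilde_{j}$ has the Bogovski\u i form~\eqref{eq:kernel_def} but with the weight $\omega$ replaced by $\partial_{x_j}\omega\in\Ccalzinf(\Brho)$. The latter has vanishing rather than unit integral, so the usual zero-mean constraint on the argument can be traded for a zero-mean property of the weight. I would show that this ``mean-zero weighted'' Bogovski\u i-type operator maps $L^2(\Omega)$ into $[H^1_0(\Omega)]^n$ without any restriction on $f$, by recycling the classical support and Calder\'on-Zygmund analyses of Acosta-Dur\'an and Galdi: the geometric cone-sweeping structure of $\Gbftilde_{j}(x,\cdot)$ as $x\to\partial\Omega$, inherited from $\Gbf$, is precisely what forces the boundary trace of $\textbf{II}_j$ to vanish, while the $L^2\to H^1$ boundedness only requires $\partial_{x_j}\omega$ to be smooth and compactly supported in $\Brho$. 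This modified singular-integral step is the main obstacle; once it is in place, $\partial_{x_j}\ubf\in[H^1_0(\Omega)]^n$ for every $j$, which together with $\ubf\in[H^1_0(\Omega)]^n$ yields $\ubf\in[H^2_0(\Omega)]^n$ as required.
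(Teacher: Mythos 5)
Your proposal is correct and follows essentially the same route as the paper's appendix: both rest on Galdi's formula (Proposition~\ref{proposition:Galdi}) to split $\partial_{x_j}\ubf$ into the Bogovskiĭ operator applied to $\partial_{y_j}f$ plus the operator with kernel $\Gbftilde_{j}$ (i.e.\ the weight $\omega$ replaced by $\partial_{x_j}\omega$), and both reduce the second piece to the lowest-order boundary argument. The only divergence is in the first piece, where you invoke the lowest-order $H^1_0$-membership directly for the $L^2$ datum $\partial_{y_j}f$ (the zero-mean observation is not actually needed for that), whereas the paper runs a more explicit approximation argument --- $L^\infty$ approximants, continuity up to the boundary, the Swanson--Ziemer result, and the generalised Poincar\'e inequality --- precisely because the appendix aims to make explicit the density-type claims from the literature whose proofs the authors could not locate.
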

\begin{proof}
\textbf{Step~1: a decomposition of~$\gradbf \ubf$.}
Proposition~\ref{proposition:Galdi} allows us to
express the first derivatives of~$\ubf$
as a sum of two contributions.
The first correspond to Bogovskiĭ's constructions
applied to the first derivatives of~$f$;
the second are equivalent to~\eqref{eq:Bogo_rightinv}
with~$\omega$ in~\eqref{eq:kernel_def}
replaced by its first derivatives,
which still belongs to~$\Ccalzinf(\Omega)$.

In other words, we write $\gradbf\ubf = \taubold + \etabold$,
where the $j$-th columns, $j =1, \dots, n$, of~$\taubold$
and~$\etabold$ are given by
\[
\taubold_j(x) =\int_\Omega \Gbf(x,y) \ \partial_{y_j}f(y)\ \mathrm{d}y ;
\qquad \qquad
\etabold_j(x) = \int_\Omega \Gbftilde_{j}(x,y)\ f(y)\ \mathrm{d}y.
\]
\noindent \textbf{Step~2: treating~$\taubold$.}
Let $j =1,\dots, n$ be fixed.
Consider a sequence $g_m$ in $L^\infty(\Omega)$ such that
$g_m\to \partial_{y_j}f$ in $L^2(\Omega)$ as $m\to\infty$.
We consider a sequence of tensors~$\taubold_m$
whose $j$-th columns are given by
\[
(\taubold_m)_j (x) = \int_\Omega \Gbf(x,y) \ g_m (y)\ \mathrm{d}y.
\]
Applying \cite[Proposition~$2.1$]{Acosta-Duran:2017},
it follows that~$(\taubold_m)_j$ is continuous
and vanishes on~$\partial\Omega$;
Aa a result of~\cite{Swanson-Ziemer:1999},
we obtain that~$(\taubold_m)_j$
belongs to~$[W^{1,\infty}_0(\Omega)]^n$.
Using \cite[Corollary~19, part (iii)]{Guzman-Salgado:2021}
\footnote{In the literature, this result is referred to
as generalised Poincar\'e inequality,
which differs from~\eqref{0th-Babu-Aziz}
as no zero average condition and zero boundary conditions
on~$f$ are imposed.}
applied to $\partial_{y_j}f-g_m\in L^2(\Omega)$,
we get $(\taubold_m)_j \to \taubold_j$ in $[H^1(\Omega)]^n$.
Since $(\taubold_m)_j$ is in $[W^{1,\infty}_0(\Omega)]^n$ for all $m$,
we deduce that $\taubold_j$ belongs to $[H^1_0(\Omega)]^n$.
\medskip

\noindent \textbf{Step~3: treating~$\etabold$ and conclusion.}
The proof that~$\etabold_j$
belongs to~$[H^1_0(\Omega)]^n$
essentially boils down
to the proof that~$\ubf$ in~\eqref{0th-Babu-Aziz}
vanishes on the boundary of~$\Omega$;
the only difference resides in the presence
of~$\Gbftilde$ in lieu of~$\Gbf$,
which solely impacts the constant in~\eqref{0th-Babu-Aziz}.
We conclude that~$\gradbf\ubf$ belongs
to~$[H^1_0(\Omega)]^{n\times n}$.
In other words, $\ubf$ belongs to~$[H^2_0(\Omega)]^n$,
since~\eqref{0th-Babu-Aziz} already gives
that $\ubf$ is in~$[H^1_0(\Omega)]^n$.
\end{proof}

\begin{remark} \label{remark:nonhilbertian-Setting}
Identity~\eqref{eq:galdi_form} may allow us
to prove a first order Babu\v ska-Aziz inequality
also in a non-Hilbertian setting,
namely we may substitute $H^k$-type spaces
by $W^{k,p}$-type spaces, $p \ne 2$.
However, this would come at the price of suboptimal estimates
as those in~\eqref{suboptimal-Galdi}.
The reason for this is the use of the Calder\'on-Zygmund theory
instead of the Fourier transform approach by Dur\'an
while handling the term~$\Ttildeo$
in Section~\ref{subsection:continuity-Ttildeo}.
\end{remark}

\end{document}